\newtheorem{theorem}{Theorem}[section]
\newtheorem{lemma}[theorem]{Lemma}
\newtheorem{conjecture}[theorem]{Conjecture}
\newtheorem{corollary}[theorem]{Corollary}
\newtheorem{proposition}[theorem]{Proposition}
\newcommand\la{\lambda}
\newcommand\cD{{\mathcal D}}
\newcommand\cI{{\mathcal I}}
\newcommand\cS{{\mathcal S}}
\newcommand\CC{{\mathbb C}}
\newcommand\RR{{\mathbb R}}
\DeclareMathOperator\tr{tr}
\newcommand\indeg{d^-}
\newcommand\outdeg{d^+}
\newcommand\innbd[2]{N^-_{#1}(#2)}
\newcommand\outnbd[2]{N^+_{#1}(#2)}
\newcommand\Cabc[1]{\vec{C}_3(#1)}
\title{Hermitian adjacency spectrum and switching equivalence of mixed graphs}
\author{Bojan Mohar\thanks{Supported in part by an NSERC Discovery Grant (Canada), by the Canada Research Chair program, and by the Research Grant of ARRS (Slovenia).}~~\thanks{On leave from: IMFM, Department of Mathematics, Jadranska 19, Ljubljana, Slovenia.} \\[1mm]
{Department of Mathematics}\\{Simon Fraser University}\\{Burnaby, B.C. V5A 1S6}\\ \texttt{mohar@sfu.ca}
}
\begin{document}
\maketitle

\begin{abstract}
  It is shown that an undirected graph $G$ is cospectral with the Hermitian adjacency matrix of a mixed graph $D$ obtained from a subgraph $H$ of $G$ by orienting some of its edges if and only if $H=G$ and $D$ is obtained from $G$ by a four-way switching operation; if $G$ is connected, this happens if and only if $\la_1(G)=\la_1(D)$.
  All mixed graphs of rank 2 are determined and this is used to classify which mixed graphs of rank 2 are cospectral with respect to their Hermitian adjacency matrix. Several families of mixed graphs are found that are determined by their Hermitian spectrum in the sense that they are cospectral precisely to those mixed graphs that are switching equivalent to them.
\end{abstract}


\section{Introduction}
\label{sec:intro}

A \emph{mixed graph} is obtained from an undirected graph by orienting a subset of its edges. Formally, a mixed graph $D$ is given by its \emph{vertex-set\/} $V=V(D)$, the set $E_0(D)$ of \emph{undirected edges} and the set $E_1(D)$ of \emph{directed edges} or \emph{arcs}. We write $E(D)=E_0(D)\cup E_1(D)$ and distinguish undirected edges as unordered pairs $\{x,y\}$ of vertices, while the arc are ordered pairs $(x,y)$, $x,y\in V$. Instead of $\{x,y\} \in E_0(X)$ or $(x,y) \in E_1(X)$, we write $xy$ for short when it is clear or unimportant whether the edge $xy$ is undirected or directed. Note that for the arc $xy = (x,y)$, the first vertex $x$ is its initial vertex, and $y$ is its terminal vertex. In this paper we do not allow multiple edge between the same pair of vertices (unless stated otherwise); in this case $uv$ is an undirected edge if and only if both $uv$ and $vu$ belong to $E(D)$.

The \emph{Hermitian adjacency matrix} of a mixed graph $D$ is a matrix $H=H(D)\in \CC^{V\times V}$, whose $(u,v)$-entry $H_{uv}$ is the imaginary unit $i$ if there is an arc from $u$ to $v$, $-i$ if there is an arc from $v$ to $u$, $1$ if both arcs exist or $uv$ is an undirected edge, and $0$ otherwise. Since digons (two arcs joining two vertices in opposite direction) behave like undirected edges, we may assume that we have no digons. This matrix was introduced by Liu and Li \cite{LiuLi15} in the study of graph energy and independently by Guo and the author \cite{GuoMo}. The latter paper contains a thorough introduction to the properties of the Hermitian adjacency matrix, displays several results about its eigenvalues, and discusses similarities and differences from the case of undirected graphs.

The Hermitian adjacency matrix $H=H(D)$ is Hermitian and thus all of its eigenvalues are real. We will denote by $\la_j(D)$ the $j$th largest eigenvalue of $H$ (multiplicities counted), so that $\la_1(D)\ge \la_2(D)\ge \cdots \ge \la_n(D)$, where $n=|V(D)|$. The multiset of the eigenvalues is called the \emph{$H$-spectrum} of $D$, and the eigenvalues are referred to as the \emph{$H$-eigenvalues} when we want to stress that they come from the Hermitian adjacency matrix. Two mixed graphs with the same $H$-spectrum are said to be \emph{$H$-cospectral} (or just \emph{cospectral} for short).

Problems about existence of cospectral (undirected) graphs have long history. The original question goes back to 1956 when G\"unthard and Primas \cite{GuPr56} raised the question whether there are non-isomorphic cospectral trees. They believed that no such trees exist. The seminal paper by Collatz and Sinogowitz \cite{CoSi57} provided the first examples of cospectral trees. Later, Schwenk \cite{Schw73} proved that ``almost all trees are cospectral'', a result that was explored further in \cite{McK77} and \cite{GoMcK82}. The original motivation came from mathematical chemistry. In 1966, additional motivation came from mathematical physics and differential geometry when Kac \cite{Ka66} asked his famous question: ``Can one hear the shape of a drum?'' Of course, the meaning of this question is about which geometric (or combinatorial) properties of Riemannian manifolds (and graphs) are determined by the spectrum of its Laplacian (or adjacency) operator.

There are many results about cospectral graphs and about those that are determined by their spectrum. Two surveys on this subject have appeared quite recently \cite{vDHa03,vDHa09}. We refer to these for additional information.

Guo and Mohar \cite{GuoMo} discussed operations that preserve the $H$-spectrum of mixed graphs. They found an operation, called \emph{four-way switching}, which can be described as a special similarity transformation (see Section \ref{sect:4WS}). This operation does not occur when dealing with undirected graphs, but in case of mixed graphs, it gives a plethora of cospectral mixed graphs with the same underlying graph. In this paper we explore this aspect a bit further and ask when it is the case that every mixed graph that is $H$-cospectral with $D$ has the same underlying graph and can be obtained from $D$ by using a four-way switching, possibly followed by the reversal of all directed edges (which also preserves the spectrum). In such a case we say that $D$ is \emph{determined by its $H$-spectrum} (shortly \emph{DHS}).

The main results of this paper are twofold.
First, it is shown that an undirected graph $G$ is cospectral with the Hermitian adjacency matrix of a mixed graph $D$ obtained from a subgraph $H$ of $G$ by orienting some of its edges if and only if $H=G$ and $D$ is obtained from $G$ by a four-way switching operation. Furthermore, if $G$ is connected, this happens if and only if $\la_1(G)=\la_1(D)$. In addition it is also proved when the spectrum of $D$ is the same as the negative of the spectrum of $G$. See Theorems \ref{thm:cospectral to orientation} and \ref{thm:anticospectral}.

Second, all mixed graphs of rank 2 are determined and this is used to classify which mixed graphs of rank 2 are cospectral with respect to their Hermitian adjacency matrix. See Theorem \ref{thm:rank 2}. This result has some consequences on mixed graphs that are determined by their $H$-spectra. The paper ends with a beautiful characterization showing which complete multipartite digraphs with parts of size $n-a$, $n$, and $n+a$ (and all arcs directed ''clockwise'') are DHS; see Corollary \ref{cor:big DHS family}.

\section{Basic definitions and notation}
\label{sec:defn}

Let $D$ be a mixed graph. If $E_1(D)=\emptyset$, then we  say that $D$ is \emph{undirected} or, more simply, that $D$ is a \emph{graph}.
The \emph{underlying graph of a mixed graph} $D$, denoted by $G(D)$, is the graph with vertex-set $V(D)$ and edge-set $E = \{ \{x,y\} \mid xy \in E(D)\}$. If a mixed graph $D$ has no undirected edges and no digons, we say that $D$ is an \emph{oriented graph}. The set of all mixed graphs whose underlying graph is $G$ will be denoted by $\cD(G)$; its elements will be referred to as \emph{mixed graphs based on $G$}.
In this paper, we restrict our attention to \emph{simple} mixed graphs without digons, meaning that $G(D)$ is a simple graph and each edge of $G(D)$ is either in $E_0(D)$ or precisely one of its orientations is in $E_1(D)$. In some cases we will allow multiple edges, in which case we shall speak of (\emph{mixed\/}) \emph{multigraphs}. However, digons are always excluded.

For a vertex $x \in V(D)$, we define the set of \emph{in-neighbours} of $x$ as $\innbd{D}{x} = \{u \in V(D) \mid ux \in E(D)\}$ and the set of \emph{out-neighbours} as $\outnbd{D}{x} = \{u \in V(D) \mid xu \in E(D)\}$.
The \emph{in-degree} of $x$, denoted $\indeg(x)$, is the number of in-neighbours of $x$. The \emph{out-degree} of $x$, denoted $\outdeg(x)$, is the number of out-neighbours of $x$. Note that undirected edges count toward in- and out-degrees.

For a mixed graph $D$ with vertex-set $V= V(D)$ and edge-set $E = E(D)$, we consider the \emph{Hermitian adjacency matrix} $H = H(D) \in \CC^{V\times V}$, whose entries $H_{uv}$ are given by
\[
H_{uv} = \begin{cases} ~1 & \text{if } uv \text{ and } vu \in E ; \\
~i & \text{if } uv \in E \text{ and } vu \notin E ; \\
-i & \text{if } uv \notin E \text{ and } vu \in E ; \\
~0 & \text{otherwise.}\end{cases}
\]
In the case of mixed multigraphs (without digons), the entries of $H$ are obtained by summing up contributions of all edges joining two vertices.

Observe that $H$ is a Hermitian matrix and so is diagonalizable with real eigenvalues.

\section{Four-way switching}
\label{sect:4WS}

The \emph{converse} of a mixed graph $D$ is the mixed graph $D^T$ with the same vertex-set, same set of undirected edges and the arc-set $E_1(D^T) = \{xy \mid yx \in E_1(D) \}$.
It is immediate from the definition of the Hermitian adjacency matrix that if $D$ is a mixed graph and $D^T$ is its converse, then $H(D^T) = H(D)^T = \overline{H(D)}$. This implies the following result (see \cite{GuoMo}).

\begin{proposition}
\label{prop:converse cospectral}
A mixed graph $D$ and its converse $D^T$ are $H$-cospectral.
\end{proposition}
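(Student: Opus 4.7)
The statement is essentially a corollary of the identity $H(D^T) = H(D)^T = \overline{H(D)}$, which the excerpt already records right before the proposition. So the plan is simply to turn one of these two equalities into an eigenvalue statement; there are two natural routes, and I would present the cleaner of them.

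The first route, which I would use as the main argument, is to invoke the general fact that any square complex matrix $A$ and its transpose $A^T$ have the same characteristic polynomial, since $\det(A - \la I) = \det((A-\la I)^T) = \det(A^T - \la I)$. Applied to $A = H(D)$, this gives $\det(H(D^T) - \la I) = \det(H(D) - \la I)$, so $H(D)$ and $H(D^T)$ have the same spectrum, which is exactly the definition of $H$-cospectrality.

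As a sanity check, I would also note the alternative route via complex conjugation: since $H(D)$ is Hermitian, its eigenvalues $\la_j(D)$ are real, and if $H(D)x = \la x$ with $x \neq 0$, then conjugating gives $\overline{H(D)}\,\bar{x} = \bar{\la}\,\bar{x} = \la\bar{x}$, so $\overline{H(D)}$ (which equals $H(D^T)$) has the same eigenvalues with the same multiplicities. There is no real obstacle here — the only ``content'' is the bookkeeping identification $H(D^T) = \overline{H(D)}$ which comes straight from the definition, and the rest is a one-line linear algebra fact.
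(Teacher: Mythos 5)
Your proof is correct and follows the same route as the paper, which simply records the identity $H(D^T) = H(D)^T = \overline{H(D)}$ and declares the cospectrality immediate; you have merely filled in the one-line linear algebra step (equal characteristic polynomials of a matrix and its transpose) that the paper leaves to the reader.
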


Guo and Mohar \cite{GuoMo} unveiled a more complicated transformation which preserves the $H$-spectrum. Suppose that the vertex-set of $D$ is partitioned in four (possibly empty) sets, $V(D) = V_1\cup V_{-1}\cup V_i\cup V_{-i}$. An edge $xy\in E(D)$ is said to be of \emph{type} $(j,k)$ for $j,k\in\{\pm1, \pm i\}$ if $x\in V_j$ and $y\in V_k$. The partition is \emph{admissible} if the following conditions hold:
\begin{itemize}
  \setlength\itemsep{0pt}
  \item[(a)] There are no digons of types $(1,-1)$ or $(i,-i)$.
  \item[(b)] All edges of types $(1,i),(i,-1),(-1,-i),(-i,1)$ are contained in digons.
\end{itemize}
A \emph{four-way switching} with respect to a partition $V(D) = V_1\cup V_{-1}\cup V_i\cup V_{-i}$ is the operation of changing $D$ into the mixed graph $D'$ by making the following changes:
\begin{itemize}
  \setlength\itemsep{0pt}
  \item[(c)] reversing the direction of all arcs of types $(1,-1),(-1,1),(i,-i),(-i,i)$;
  \item[(d)] replacing each digon of type $(1,i)$ with a single arc directed from $V_1$ to $V_i$ and replacing each digon of type $(-1,-i)$ with a single arc directed from $V_{-1}$ to $V_{-i}$;
  \item[(e)] replacing each digon of type $(1,-i)$ with a single arc directed from $V_{-i}$ to $V_1$ and replacing each digon of type $(-1,i)$ with a single arc directed from $V_i$ to $V_{-1}$;
  \item[(f)] replacing each non-digon of type $(1,-i),(-1,i),(i,1)$ or $(-i,-1)$ with the digon.
\end{itemize}

\begin{figure}
\centering
\includegraphics[scale=0.9]{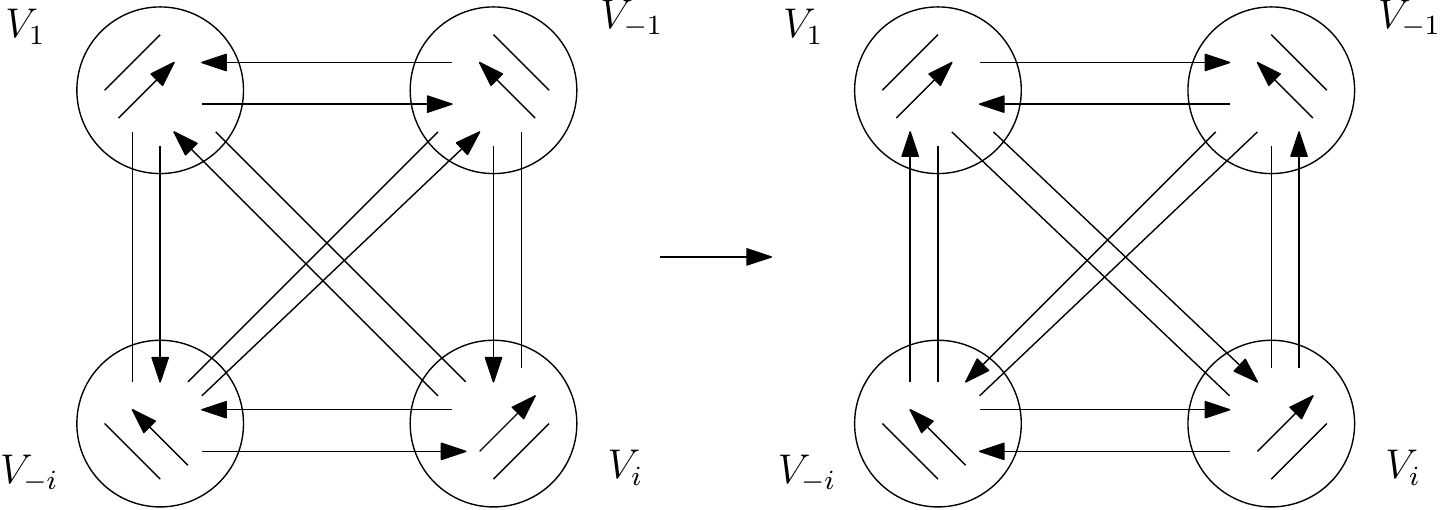}
\caption{Four-way switching on the admissible edges.}
\label{fig:four-way-switching}
\end{figure}

The following result was proved in \cite{GuoMo}.

\begin{theorem}
\label{thm:generalized switch}
If a partition $V(D) = V_1\cup V_{-1}\cup V_i\cup V_{-i}$ is admissible, then the four-way switching gives a mixed graph that is cospectral with $D$.
\end{theorem}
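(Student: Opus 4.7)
The natural strategy is to exhibit $D$ and $D'$ as being related by a unitary diagonal similarity of their Hermitian adjacency matrices. Define the diagonal matrix $S \in \CC^{V \times V}$ by
\[
S_{vv} = \begin{cases} 1 & \text{if } v \in V_1, \\ -1 & \text{if } v \in V_{-1}, \\ i & \text{if } v \in V_i, \\ -i & \text{if } v \in V_{-i}. \end{cases}
\]
Since $|S_{vv}| = 1$ for all $v$, the matrix $S$ is unitary, so the conjugated matrix $S^* H(D) S$ is Hermitian and has the same spectrum as $H(D)$. The plan is to show that $S^* H(D) S = H(D')$, from which cospectrality follows at once.

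Because $S$ is diagonal, $(S^* H(D) S)_{uv} = \overline{S_{uu}}\,H(D)_{uv}\,S_{vv}$, so the verification reduces to a finite case analysis based on the ordered pair of classes $(j,k)$ containing $u$ and $v$, and on the edge type between them (no edge, one arc, reverse arc, or digon). For pairs within the same class, $\overline{j}\,k = 1$ and the entry is unchanged; this is consistent with the switching leaving these edges alone. For the remaining pairs, one computes the scalar $\overline{j}\,k \in \{-1, \pm i\}$ and checks, in each of the configurations (c)--(f), that multiplying $H(D)_{uv}$ by this scalar produces exactly the entry prescribed by the switching rule for $H(D')_{uv}$. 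For example, reversing an arc of type $(1,-1)$ corresponds to $\overline{j}\,k = -1$ acting on $H_{uv} = \pm i$; converting a digon of type $(1,i)$ (where $H_{uv} = 1$) to an arc from $V_1$ to $V_i$ corresponds to $\overline{j}\,k = i$; and so on for the other sub-cases. Each of (c), (d), (e) and (f) matches one algebraic identity of the form $\overline{j}\,k \cdot H(D)_{uv} = H(D')_{uv}$, and these exhaust all pairs $(j,k)$ with $\overline{j}\,k \neq 1$.

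The real content of the statement is that the admissibility conditions guarantee that $S^* H(D) S$ is in fact the Hermitian adjacency matrix of a simple mixed graph without digons, i.e.\ that no entry of $S^* H(D) S$ equals $-1$ (which would correspond to no valid edge type). Tracing through the sixteen combinations of $\overline{j}\,k$ and $H(D)_{uv}$, an entry of $-1$ can arise only from (i) a digon of one of the types $(1,-1)$ or $(i,-i)$, which is excluded by condition (a); or (ii) a non-digon arc of one of the types $(1,i)$, $(i,-1)$, $(-1,-i)$, $(-i,1)$, which is excluded by condition (b). Thus admissibility is precisely what is needed to make $H(D')$ well-defined as a Hermitian adjacency matrix, and the identity $H(D') = S^* H(D) S$ then yields the cospectrality. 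The only mildly delicate point is the bookkeeping in the case analysis; once the correct matrix $S$ is written down, the rest is a direct verification with no further ideas required.
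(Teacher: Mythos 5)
Your proposal is correct and follows essentially the same route as the paper, which (citing Guo--Mohar) establishes the theorem via the similarity $H(D') = S^{-1}H(D)S$ with the same diagonal matrix $S$ (note $S^* = S^{-1}$ here since the diagonal entries are unimodular) and the same observation that admissibility is exactly what prevents entries equal to $-1$. Your write-up simply carries out the case-checking more explicitly than the paper's sketch.
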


Let $D'$ be the mixed graph obtained from $D$ by a four-way switching (with respect to an admissible partition). The proof in \cite{GuoMo} shows that its Hermitian adjacency matrix $H(D')$ is similar to $H(D)$. The corresponding
similarity transformation uses the diagonal matrix $S$ whose entry $S_{vv}$ is equal to $j\in\{\pm1, \pm i\}$ if $v\in V_j$. The entries of the matrix $H' = S^{-1}HS$ are given by the formula
$$
    H'_{uv} = H_{uv} S_{vv}/S_{uu}.
$$
It is clear that $H'$ is Hermitian and that its non-zero elements are in $\{\pm1, \pm i\}$. The entries within the parts of the partition remain unchanged. Undirected edges of type $(1,-1)$ would give rise to the entries $-1$, but since these are excluded for admissible partitions, this does not happen. On the other hand, any other entry in this part is multiplied by $-1$, and thus arcs of types $(1,-1)$ or $(-1,1)$ just reverse their orientation. Similar conclusions are made for other types of edges. Admissibility is needed in order that $H'$ has no entries equal to $-1$.

We can use the similarity interpretation discussed above as a definition. Let $\cI=\{\pm1, \pm i\}$ be the multiplicative group generated by the fourth primitive root of unity $i$ and let $\cS$ be the set of all diagonal matrices whose diagonal elements belong to $\cI$. For a mixed graph $D$, each $S\in\cS$ defines a partition $V(D) = V_1\cup V_{-1}\cup V_i\cup V_{-i}$, where $V_j = \{v\in V(D) \mid S_{vv} = j\}$, $j\in\cI$. If this partition is admissible, then we say that the mixed graph $D'$ with Hermitian adjacency matrix $H(D') = S^{-1}H(D)S$ is obtained from $D$ by a four-way switching. We say that mixed graphs $D_1$ and $D_2$ are \emph {switching equivalent} if one can be obtained from the other by a sequence of four-way switchings and operations of taking the converse.

\begin{proposition}
\label{prop:equivalence}
For each graph $G$, switching equivalence gives an equivalence relation on the set $\cD(G)$.
\end{proposition}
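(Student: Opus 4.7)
My plan is to verify the three defining properties of an equivalence relation on $\cD(G)$: reflexivity, symmetry, and transitivity. The key observation underlying all three is the matrix-level description of four-way switching developed in the paragraphs just above the proposition: for $S\in\cS$, the partition of $V(D)$ induced by $S$ is admissible \emph{if and only if} $H' = S^{-1}H(D)S$ is the Hermitian adjacency matrix of a mixed graph (i.e.\ its nonzero entries lie in $\{\pm 1,\pm i\}$, with no $-1$ appearing), and in that case the resulting mixed graph $D'$ is precisely the one obtained by the four-way switching.

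For reflexivity, I would take $S = I \in \cS$: this puts all of $V(D)$ into $V_1$, making conditions (a) and (b) vacuous, and $H(D) = I^{-1}H(D)I$, so $D$ is switching equivalent to itself. Transitivity is immediate from the definition, since the concatenation of a sequence of elementary operations from $D_1$ to $D_2$ with one from $D_2$ to $D_3$ is itself such a sequence from $D_1$ to $D_3$.

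The substantive point is symmetry. It suffices to check that each elementary operation is invertible by another elementary operation; then a sequence realizing $D_1 \sim D_2$ can be reversed, inverting each step, to realize $D_2 \sim D_1$. Taking the converse is its own inverse because $(D^T)^T = D$. For a four-way switching with $S\in\cS$ taking $D$ to $D'$, I would consider $S^{-1}$, which is again in $\cS$ since $\cI$ is a multiplicative group. Then
\[
    (S^{-1})^{-1}\, H(D')\, S^{-1} \;=\; S\, S^{-1}H(D)S\, S^{-1} \;=\; H(D),
\]
so the matrix produced by applying $S^{-1}$ to $D'$ is $H(D)$, which is a valid Hermitian adjacency matrix by hypothesis. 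By the equivalence recalled in the first paragraph, the partition of $V(D')$ induced by $S^{-1}$ is automatically admissible, and the associated four-way switching on $D'$ yields $D$. I do not expect any genuine obstacle here; the only thing to be careful about is the invocation of the ``admissibility $\Leftrightarrow$ valid output matrix'' correspondence, which is exactly what the case analysis following Theorem~\ref{thm:generalized switch} establishes.
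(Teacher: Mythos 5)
Your proof is correct and rests on the same two pillars as the paper's: the group structure of $\cS$ and the fact that a partition is admissible precisely when the conjugated matrix $S^{-1}H(D)S$ has no entries equal to $-1$, i.e.\ is again a valid Hermitian adjacency matrix. The only organizational difference is that you invert each elementary operation individually (using that the switching by $S^{-1}$ applied to $D'$ is automatically admissible because its output is $H(D)$), whereas the paper normalizes an arbitrary sequence by commuting converses past similarities via $\overline{S^{-1}HS}=S\,\overline{H}\,S^{-1}$; both routes are sound.
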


\begin{proof}
Clearly, if two mixed graphs are switching equivalent, they have the same underlying graph $G$. The similarity relation with matrices in $\cS$ is symmetric and transitive since $\cS$ is a group for matrix multiplication. To see what happens if we take the converse after a similarity, we first note that $H^T = \overline H$ (complex conjugation) and that $\cS$ is closed for complex conjugation. If $D'$ is obtained from $D$ by a similarity and then taking the converse, its matrix satisfies:
$$
    H(D') = \overline{S^{-1}H(D)S} = \overline{S^{-1}}~\, \overline{H(D)}~\, \overline{S} = S\,\overline{H(D)}\,S^{-1}.
$$
This means that $D'$ is obtained by a four-way switching from the converse of $D$. This now easily implies the result.
\end{proof}

The proof of the proposition shows that the switching equivalence class of $D$ contains all mixed graphs that are obtained from $D$ or from $D^T$ by a single application of a four-way switching.

There are some special cases of the 4-way switching in which two of the four sets are empty. They involve two of the sets (one of which can be assumed to be $V_1$) and one of the other three possible sets. The cut between them has only two types of edges (see Figure \ref{fig:four-way-switching}), and the edges in the cut are transformed according to the rule in the 4-way switching. We call these \emph{$2$-way switching} operations. There are really two different cases:

(a) A \emph{directed $2$-way switching} reverses the edges in a cut consisting of arcs only.

(b) A \emph{mixed $2$-way switching} has a cut containing undirected edges and arcs in one direction only. The switching replaces the arcs in the cut by undirected edges, and replaces all formerly undirected edges by arcs in the direction opposite to the direction of former arcs.

Based on the above, a mixed graph $D$ is said to be \emph{determined by its Hermitian spectrum} (\emph{DHS\/}) if every mixed graph with the same Hermitian spectrum can be obtained from $D$ by a four-way switching, possibly followed by the reversal of all directed edges.


The following result was proved in \cite{LiuLi15} (see also \cite{GuoMo}).

\begin{theorem}
\label{thm:forests}
Let\/ $F$ be a forest. Then all mixed graphs whose underlying graph is isomorphic to $F$ are cospectral with $F$.
\end{theorem}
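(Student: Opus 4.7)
The plan is to exhibit, for every mixed graph $D$ with underlying graph $F$, an explicit diagonal matrix $S\in\cS$ such that $S^{-1}H(D)S = H(F)$, where $H(F)$ is the ordinary $\{0,1\}$-adjacency matrix of the undirected forest. Since similar matrices have the same eigenvalues, this will give the theorem. Because the Hermitian adjacency matrix of a disjoint union decomposes block-diagonally, it suffices to treat a single tree component $T$.

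So fix a tree $T$ and a mixed graph $D\in\cD(T)$. Choose any root $r\in V(T)$ and set $S_{rr}=1$. Traverse $T$ in BFS (or DFS) order from $r$. For each non-root vertex $v$, let $u$ be its parent; since $uv$ is an edge of $D$, we have $H(D)_{uv}\in\cI=\{\pm1,\pm i\}$, and we define
$$
  S_{vv} \;=\; S_{uu}/H(D)_{uv}\in\cI.
$$
Because $T$ is a tree, each vertex has a unique parent, so this recursion is consistent — there is no cycle around which competing constraints could clash.

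Now let $H'=S^{-1}H(D)S$, whose entries are $H'_{xy}=H(D)_{xy}S_{yy}/S_{xx}$. For a tree edge $uv$ with $v$ the child of $u$, the defining equation for $S_{vv}$ gives $H'_{uv}=1$, and Hermiticity forces $H'_{vu}=1$. For non-edges $xy$, $H(D)_{xy}=0$ and thus $H'_{xy}=0$. Hence $H'=H(T)$, and $H(D)$ is similar to $H(T)$ via $S$, so they have equal spectra.

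The only thing that could fail is the consistency of the recursion, and that is precisely where acyclicity is used: on a cycle $v_0v_1\cdots v_k=v_0$ the assignment $S_{v_j}=S_{v_0}\prod_{\ell<j} H(D)_{v_\ell v_{\ell+1}}^{-1}$ would be well-defined only if $\prod_\ell H(D)_{v_\ell v_{\ell+1}}=1$, which in general fails on oriented cycles (and indeed the spectrum of a mixed cycle does depend on its orientation). The forest hypothesis sidesteps this obstruction entirely. The construction also interprets $S$ as defining an admissible partition $V(D)=V_1\cup V_{-1}\cup V_i\cup V_{-i}$, showing that in fact $D$ is switching-equivalent to $F$; admissibility is automatic here since after switching every edge entry is $+1$.
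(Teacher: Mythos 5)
Your proof is correct and is essentially the paper's argument: both rest on conjugating $H(D)$ by a diagonal unitary with entries in $\cI$, which is exactly the similarity underlying Theorem~\ref{thm:generalized switch}. The only difference is packaging --- the paper obtains the identity $S^{-1}H(D)S=H(F)$ by iterating mixed $2$-way switchings on the cut edges of the forest, whereas you write down the composite diagonal matrix $S$ directly via a rooted BFS recursion (which is consistent precisely because $F$ is acyclic) and verify the similarity by hand.
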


This result is easily obtained by using Theorem \ref{thm:generalized switch}, as changing an undirected cutedge to an arc or changing an arc to its reverse is a special case of a mixed 2-way switching.

\begin{figure}
\centering
\includegraphics[scale=1.3]{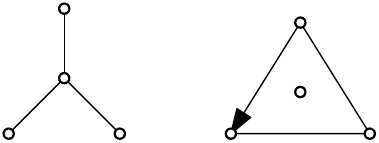}
\caption{Cospectral mixed graphs.}
\label{fig:DS but not DHS}
\end{figure}

An undirected graph is said to be \emph{determined by its spectrum} (DS) if every graph cospectral to it is isomorphic to it (see \cite{vDHa03,vDHa09}).
This is a weaker property than being DHS. Namely, if a graph is DHS, it is necessarily DS. The converse is not true as evidenced by two graphs in Figure \ref{fig:DS but not DHS} (the undirected star $K_{1,3}$ is DS).

The following graphs are well-known to be DS:
\begin{itemize}
\setlength\itemsep{0pt}
\item[\rm (a)]
Disjoint union of complete graphs (see \cite[Proposition 1 and Proposition 6]{vDHa03}).
\item[\rm (b)]
The balanced complete bipartite graph $K_{n,n}$ (see \cite[Proposition 5]{vDHa03}).
\item[\rm (c)]
Disjoint union of paths (see \cite[remark after Proposition 7]{vDHa03}).
\end{itemize}

For complete graphs, it was proved in \cite{GuoMo} that $K_n$ ($n\ge1$) is DHS. However, the results in Section \ref{sect:rank2} show that $K_{n,n}$ is not DHS whenever $n$ is not a square-free integer (and possibly never except for a few small values of $n$).




\section{Mixed graphs cospectral to their underlying graph}
\label{sect:cospectral to G(D)}

In this section it is characterized when a mixed graph can be cospectral (or antispectral -- see the definition in the sequel) to its underlying graph.

\begin{theorem}
\label{thm:cospectral to orientation}
Let $G$ be a connected undirected multigraph (multiple edges and loops allowed). Let $D$ be a spanning mixed multigraph obtained from $G$ by deleting some edges and orienting some of the non-loop edges so that no digons are created. The following statements are equivalent:
\begin{itemize}
\setlength\itemsep{0pt}
\item[\rm (a)]
$G$ and $D$ are $H$-cospectral.
\item[\rm (b)]
$\la_1(G) = \la_1(D)$.
\item[\rm (c)]
None of the edges have been deleted, i.e., $G(D)=G$, and
there exists a partition of the vertex-set of $D$ into four (possibly empty) parts $V_{1}$, $V_{-1}$, $V_{i}$, and $V_{-i}$ such that the following holds.
For $j \in \cI$, the mixed graph induced by $V_j$ in $D$ contains only undirected edges. Every other edge $uv$ of $D$ is such that $u \in V_j$ and $v \in V_{-ij}$ for some $j \in \cI$. See Figure \ref{fig:rhodelta}(a).
\item[\rm (d)]
$G$ and $D$ are switching equivalent.
\end{itemize}
\end{theorem}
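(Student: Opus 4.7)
The plan is to prove the four conditions equivalent along the cycle (a) $\Rightarrow$ (b) $\Rightarrow$ (c) $\Rightarrow$ (d) $\Rightarrow$ (a). The implication (a) $\Rightarrow$ (b) is immediate, and (d) $\Rightarrow$ (a) follows at once from Theorem \ref{thm:generalized switch} together with Proposition \ref{prop:converse cospectral}. The main obstacle is (b) $\Rightarrow$ (c): the single scalar hypothesis $\lambda_1(G) = \lambda_1(D)$ must be leveraged into a global structural statement about $D$. The remaining step (c) $\Rightarrow$ (d) is then a short explicit construction.

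For (b) $\Rightarrow$ (c), let $x$ be a unit eigenvector of $H(D)$ for $\lambda_1(D)$, and write $x_v = |x_v|\, e^{i\theta_v}$. Since $D$ is obtained from $G$ by deleting and orienting edges (without creating digons), we have the entrywise bound $|H(D)_{uv}| \le A(G)_{uv}$. Combining the triangle inequality with this bound and the Rayleigh characterization of $\lambda_1(G)$ yields
\begin{align*}
\lambda_1(D) = x^{*} H(D)\, x \;\le\; \sum_{u,v} |H(D)_{uv}|\,|x_u|\,|x_v| \;\le\; |x|^{T} A(G)\, |x| \;\le\; \lambda_1(G).
\end{align*}
Hypothesis (b) forces equality throughout. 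The last equality makes $|x|$ a Perron eigenvector of $A(G)$; connectedness of $G$ then gives $|x_v| > 0$ for every $v$. The middle equality forces $|H(D)_{uv}| = A(G)_{uv}$ pointwise, so no edge of $G$ has been deleted (and in the multigraph case parallel edges must be oriented identically, so that their contributions to $H(D)_{uv}$ sum with maximal modulus). Equality in the triangle inequality requires each term $\overline{x_u}\,H(D)_{uv}\,x_v$ to be a nonnegative real number, i.e., $e^{i(\theta_v - \theta_u)}\,H(D)_{uv} \ge 0$ on every edge. Inspection of the three possibilities $H(D)_{uv} \in \{1, i, -i\}$ shows that $\theta_v - \theta_u \in \{0, \pi/2, -\pi/2\} \pmod{2\pi}$, with the value $0$ forced exactly when $uv$ is undirected. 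Since $G$ is connected, after a harmless global rotation all phases $e^{i\theta_v}$ lie in $\cI$, and the sets $V_j := \{v : e^{i\theta_v} = j\}$ provide the partition in (c): same-part edges are undirected, while the case analysis of phase differences pins down the cyclic rule that every arc from $V_j$ terminates in $V_{-ij}$.

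For (c) $\Rightarrow$ (d), define the diagonal unitary $S \in \cS$ by $S_{vv} = j$ whenever $v \in V_j$. One verifies, on each of the two edge-types appearing in (c), that $S^{-1} H(D) S = H(G)$; each verification reduces to a one-line identity such as $\bar{j}\cdot i\cdot (-ij) = 1$ or $\bar{j}\cdot 1\cdot j = 1$. The partition is admissible for $D$: there are no digons by hypothesis, every arc of $D$ has a type in $\{(1,-i), (-1,i), (i,1), (-i,-1)\}$, which is disjoint from the forbidden arc list $\{(1,i), (i,-1), (-1,-i), (-i,1)\}$ of admissibility (b), and every undirected edge of $D$ lies inside a single $V_j$ and so has a type $(j,j)$, avoiding the forbidden types $(1,-1)$ and $(i,-i)$. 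Thus the four-way switching prescribed by $S$ carries $D$ to $G$, and so $G$ and $D$ are switching equivalent. Some additional bookkeeping is needed for loops and parallel edges, but the structure of the argument is unchanged.
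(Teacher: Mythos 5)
Your proof is correct and follows essentially the same route as the paper: the same cycle of implications, with (b) $\Rightarrow$ (c) obtained from the equality case of the triangle-inequality/Rayleigh-quotient chain applied to a top eigenvector of $H(D)$, followed by the phase/connectivity argument. The only (immaterial) difference is in (c) $\Rightarrow$ (d), where you apply the four-way switching to $D$ directly via $S^{-1}H(D)S=H(G)$, while the paper first passes to the converse $D^T$; both are valid.
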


\begin{figure}
\centering
\includegraphics[scale=1.1]{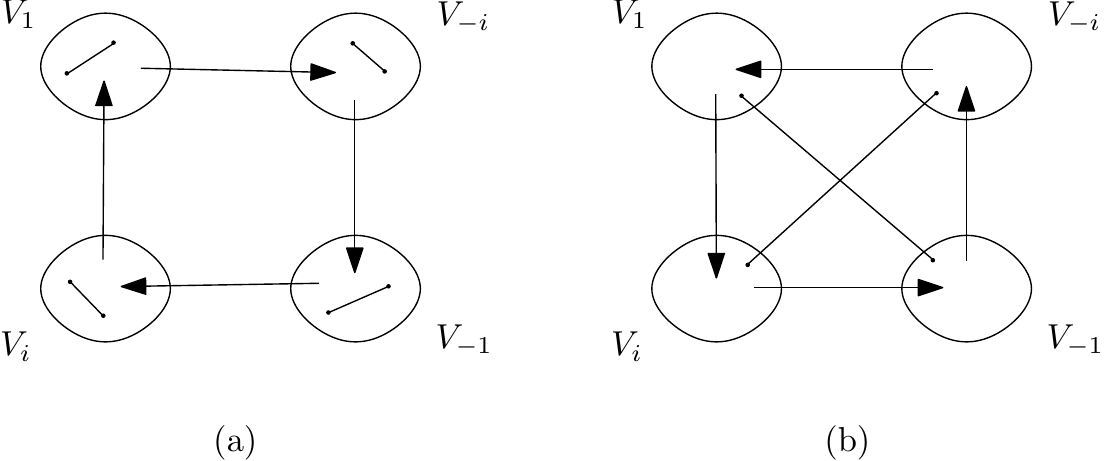}
\caption{Structure from Theorems \ref{thm:cospectral to orientation}(c) and \ref{thm:anticospectral}(c).}
\label{fig:rhodelta}
\end{figure}

\begin{proof}
It is obvious that (a) implies (b). It is also easy to see that (c) implies (d): if $D$ has the described structure, we first take its converse $D^T$ and then observe that the four-way switching with respect to the given partition yields $G$. Proposition \ref{prop:converse cospectral} and Theorem \ref{thm:generalized switch} show that (d) implies (a).

It remains to see that (b) implies (c). Assuming (b) holds, let $H=H(G)$, $H'=H(D)$, let $x$ be a normalized Peron vector for $H$. This means that $Hx=\la_1(G)x$, $x^THx=\la_1(G)$ and $x^Tx=1$. We may assume that $x$ is real and positive, and under this assumption it is uniquely determined.
Similarly, let $y\in \CC^V$ be a normalized eigenvector of $H'$ for $\la_1(D)$, and let $z\in \RR^V$ be defined by $z_v=|y_v|$, $v\in V$. Now,
\begin{eqnarray*}
\la_1(G) &=& y^*H'y = \sum_{u\in V}\sum_{v\in V} H'_{uv} \overline{y_u} y_v \\
      &\le& \sum_{u\in V}\sum_{v\in V} |H'_{uv}| z_u z_v \\
      &\le& \sum_{u\in V}\sum_{v\in V} H_{uv} z_u z_v \\
      &\le& \sum_{u\in V}\sum_{v\in V} H_{uv} x_u x_v = \la_1(D).
\end{eqnarray*}
Since $\la_1(G)=\la_1(D)$, we see that all inequalities must be equalities. The last inequality shows that $z=x$, since $x$ is the unique non-negative vector that attains the maximum of the quadratic form. It follows that $z_u>0$ for every $u$. Now, the middle inequality shows that $H_{uv}=|H'_{uv}|$ for all $u,v\in V$, which means no edges have been removed.  Finally, the first inequality shows that $H'_{uv} \overline{y_u} y_v = |H'_{uv}|z_u z_v = |H'_{uv}| |y_u| |y_v|$ for every edge $uv$. As we may assume that one of the vertices has $y_v\in \RR^+$, we conclude that all neighbors $u$ of that vertex have $y_u/|y_u| \in \{1, \pm i\}$. Since $G$ is connected, the repeated use of this argument shows that $y_u/|y_u|\in \cI$ for each $u\in V$. Let $V_j=\{u\in V\mid y_u/|y_u|=j\}$. This defines the partition, and it is straightforward to check that the edges within and between the parts are as claimed in (c).
\end{proof}

A special case of Theorem \ref{thm:cospectral to orientation} where $G$ is the complete graph $K_n$ was proved in \cite{GuoMo}. In the same paper, the special case of complete graphs was also done for the case of the next theorem.

We say that two mixed graphs $D$ and $D'$ are \emph{$H$-antispectral} if for each $H$-eigenvalue $\la$ of $D$ of multiplicity $k$, $-\la$ is an eigenvalue of $D'$ with the same multiplicity $k$.

\begin{theorem}
\label{thm:anticospectral}
Let $G$ be a connected undirected multigraph (multiple edges and loops allowed). Let $D$ be a spanning mixed multigraph obtained from $G$ by deleting some edges and orienting some of the non-loop edges so that no digons are created. The following statements are equivalent:
\begin{itemize}
\setlength\itemsep{0pt}
\item[\rm (a)]
$G$ and $D$ are $H$-antispectral.
\item[\rm (b)]
$\la_1(G) = -\la_n(D)$.
\item[\rm (c)]
None of the edges have been deleted, i.e., $D(G)=G$, and
there exists a partition of the vertex-set of $D$ into four (possibly empty) parts $V_{1}$, $V_{-1}$, $V_{i}$, and $V_{-i}$ such that the following holds.
For $j \in \cI$, the graph induced by $V_j$ in $G$ is an independent set. For each $j \in \cI$, every arc with one end in $V_j$ and one end in $V_{-j}$ is contained in a digon. Every other arc $uv$ of $D$ is such that $u \in V_j$ and $v \in V_{ij}$ for some $j \in \cI$.
See Figure \ref{fig:rhodelta}(b).
\end{itemize}
\end{theorem}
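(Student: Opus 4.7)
The proof follows the template of Theorem~\ref{thm:cospectral to orientation}, with $\la_1(D)$ replaced by $-\la_n(D)$ and the Perron eigenvector of $H(D)$ replaced by a unit eigenvector for the bottom eigenvalue $\la_n(D)$. Direction (a)~$\Rightarrow$~(b) is immediate from the definition of antispectrality.

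For (c)~$\Rightarrow$~(a), the plan is to exhibit an explicit similarity. Define $S\in\cS$ by $S_{vv}=j$ whenever $v\in V_j$, and verify entry-by-entry that $S^{-1}H(G)S = -\overline{H(D)}$. With $u\in V_j$ and $v\in V_k$, the $(u,v)$-entry of $S^{-1}H(G)S$ is $(k/j)\,H(G)_{uv}$; for an undirected edge of $D$ one has $k=-j$, giving $-1=-\overline{1}$, while for an arc from $V_j$ to $V_{ij}$ one gets $i=-\overline{i}$, and the digon clause in (c) handles the remaining potential arcs between $V_j$ and $V_{-j}$ in the multigraph setting. Since $-\overline{H(D)}$ is Hermitian with spectrum equal to the negative of that of $H(D)$, this similarity yields antispectrality of $G$ and $D$.

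The substance of the theorem is (b)~$\Rightarrow$~(c). Let $x$ be the unique positive unit Perron eigenvector of $H(G)$ and let $y$ be a unit eigenvector of $H(D)$ with eigenvalue $\la_n(D)=-\la_1(G)$; set $z_v=|y_v|$. The chain
\begin{align*}
\la_1(G) = -\la_n(D) = -y^*H(D)y &\le \sum_{u,v} |H(D)_{uv}|\,|y_u|\,|y_v| \\
&\le \sum_{u,v} H(G)_{uv}\,z_u z_v \le \la_1(G)
\end{align*}
must collapse to equalities. The Rayleigh step forces $z=x>0$, so no $y_v$ vanishes. The middle inequality uses $|H(D)_{uv}|\le H(G)_{uv}$; equality in the multigraph setting requires both that no edges are deleted and that no pair of vertices carries a mixture of undirected and oriented edges (via the strict inequality $\sqrt{a^2+b^2}<a+b$ for positive $a,b$), hence $G(D)=G$. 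Equality in the first step, applied edge by edge via the triangle identity, gives
\begin{equation*}
-H(D)_{uv}\,\overline{y_u}\,y_v = |H(D)_{uv}|\,|y_u|\,|y_v|
\end{equation*}
for every edge $uv$. Writing $y_v=|y_v|e^{i\theta_v}$, this becomes $\theta_v-\theta_u\equiv\pi\pmod{2\pi}$ on undirected edges and $\theta_v-\theta_u\equiv\pi/2$ on arcs $u\to v$. Rotating $y$ so that $\theta_{v_0}=0$ at a fixed vertex and propagating along edges via connectedness of $G$, every $y_v/|y_v|$ lies in $\cI$. The partition $V_j=\{v:y_v/|y_v|=j\}$ then satisfies (c): each $V_j$ is independent (a phase jump of $0$ cannot occur), edges between $V_j$ and $V_{-j}$ are necessarily undirected, and arcs run from $V_j$ to $V_{ij}$.

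I expect the main delicate point to be tracking signs so that the target class for arcs is $V_{ij}$ rather than $V_{-ij}$ as in Theorem~\ref{thm:cospectral to orientation}: this single flip originates from the extra minus sign on $H(D)$ in the Rayleigh bound and must be threaded consistently through the phase computations and through the similarity in (c)~$\Rightarrow$~(a).
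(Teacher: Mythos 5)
Your proposal is correct and follows essentially the same route as the paper, which itself proves this theorem by rerunning the Rayleigh-quotient chain from Theorem~\ref{thm:cospectral to orientation} with $y$ an eigenvector for $\la_n(D)$ and the equality condition $-H'_{uv}\overline{y_u}y_v=|H'_{uv}|\,|y_u|\,|y_v|$, then reading off the partition from the phases $y_v/|y_v|\in\cI$. Your explicit similarity $S^{-1}H(G)S=-\overline{H(D)}$ for (c)$\Rightarrow$(a) and the $\sqrt{a^2+b^2}<a+b$ remark in the multigraph step are details the paper leaves implicit, but they do not change the argument.
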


\begin{proof}
The proof is essentially the same as that of the previous theorem. We borrow the notation and note the difference that $y$ is an eigenvector of $\la_n(D)$. Again, the nontrivial part is to show that (b) implies (c).

Now we get the same chain of inequalities as in the previous proof except that we start with $|\la_n(D)|$ and that the final of the three conclusions is that
$-H'_{uv} \overline{y_u} y_v = |H'_{uv}|z_u z_v = |H'_{uv}| |y_u| |y_v|$. Again, we conclude by connectivity that $y_u/|y_u| \in \cI$ for each $u\in V$ and define $V_j=\{u\in V\mid y_u/|y_u| = j\}$. This gives the partition, and it is straightforward to check that the edges within and between the parts are as claimed in (c).
\end{proof}

\section{Mixed graphs with $H$-rank 2}
\label{sect:rank2}

In this section we classify all mixed graphs whose Hermitian adjacency matrix has rank 2. This result shows precisely which mixed graphs of rank 2 are cospectral and hence can be used to find some mixed graphs that are DHS.

It is easy to see that an undirected graph has rank 2 if and only if it consists of a complete bipartite graph $K_{a,b}$ ($1\le a\le b$) together with $t\ge0$ isolated vertices.

When we speak about the \emph{$H$-rank\/} of a mixed graph, we mean the rank of its Hermitian adjacency matrix. The following consequence of the basic properties of the rank of matrices will be used throughout.

\begin{lemma}
\label{lem:rank subgraph}
Suppose that $D$ is a mixed graph and $D'$ is an induced mixed subgraph of $D$. Then the $H$-rank of $D$ is greater or equal to the $H$-rank of $D'$.
\end{lemma}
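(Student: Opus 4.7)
The plan is to reduce the claim to an elementary fact from linear algebra about submatrices of a Hermitian matrix. The first step is to observe that, since $D'$ is an \emph{induced} mixed subgraph of $D$, each entry of $H(D')$ equals the corresponding entry of $H(D)$: for $u,v\in V(D')$, the arcs and undirected edges between $u$ and $v$ in $D'$ are exactly those in $D$. Consequently $H(D')$ is the principal submatrix of $H(D)$ with row and column index set $V(D')$. This step genuinely requires the inducedness hypothesis; for a non-induced subgraph some entries of $H(D')$ could be zero while the corresponding entries of $H(D)$ are nonzero, and the matrix identification would fail.

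The second step is to invoke the standard fact that the rank of a submatrix is at most the rank of the ambient matrix. Indeed, deleting a single row (respectively, column) of a matrix cannot increase its rank, since the row space (respectively, column space) can only shrink under restriction or projection. Applying this successively to the rows and columns indexed by $V(D)\setminus V(D')$ yields $\rank H(D')\le \rank H(D)$, as desired. Alternatively, one may invoke Cauchy's interlacing theorem, which says that the eigenvalues of a principal submatrix of a Hermitian matrix interlace those of the original matrix; counting nonzero eigenvalues then gives the same conclusion, and in fact a slightly sharper quantitative statement.

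As the lemma is essentially a dictionary entry translating a routine linear algebra fact into the language of mixed graphs, there is no substantial obstacle. The only mild subtlety worth flagging in the write-up is the explicit appeal to inducedness, which ensures that $H(D')$ literally coincides with a principal submatrix of $H(D)$ rather than merely being a matrix of the same size indexed by a subset of the vertices.
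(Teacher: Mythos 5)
Your proof is correct and matches the paper's intent exactly: the paper states this lemma without proof as ``a consequence of the basic properties of the rank of matrices,'' and your argument---identifying $H(D')$ as a principal submatrix of $H(D)$ via inducedness and then using that a submatrix has rank at most that of the ambient matrix---is precisely the routine justification being elided. Nothing to add.
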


From now on we shall concentrate on mixed graphs with $H$-rank 2.

\begin{lemma}
\label{lem:rank2 basic properties}
Suppose that $D$ is a mixed graph whose Hermitian adjacency matrix has rank $2$. Then $D$ has the following properties:
\begin{itemize}
\setlength\itemsep{0pt}
\item[\rm (a)]
$D$ consists of one connected component with more than one vertex together with some isolated vertices.
\item[\rm (b)]
Every induced subgraph of $D$ has $H$-rank $0$ or $2$.
\item[\rm (c)]
The underlying undirected graph $G(D)$ contains no induced path on at least $4$ vertices and no induced cycle of length at least $5$.
\end{itemize}
\end{lemma}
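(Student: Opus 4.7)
For parts (a) and (b), the key observation is that the Hermitian adjacency matrix has zero diagonal, hence trace zero. A Hermitian matrix of rank $1$ has the form $\mu v v^*$ with $\mu\in\RR\setminus\{0\}$ and $v\ne0$, whose trace $\mu\|v\|^2$ is nonzero; consequently any induced Hermitian submatrix of $H(D)$ has rank $0$ or at least $2$. For (b), Lemma \ref{lem:rank subgraph} bounds the rank of an induced subgraph by $\rank(H(D))=2$, so it must be exactly $0$ or $2$. For (a), $H(D)$ is block-diagonal along connected components, and each nontrivial component (one containing at least one edge) contributes a nonzero block with zero trace, hence both a positive and a negative eigenvalue, i.e., rank at least $2$. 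If $D$ had two nontrivial components, the total rank would be at least $4$, a contradiction. Since $\rank(H(D))=2>0$, at least one component is nontrivial; combined with the previous bound, exactly one is, and the remaining vertices are isolated.

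For part (c), I would argue by contradiction: suppose $G(D)$ contains, as an induced subgraph, either a path $P_k$ with $k\ge 4$ or a cycle $C_k$ with $k\ge 5$. In either case I exhibit an induced mixed subgraph of $D$ whose underlying graph is a path $P_m$ with $m\ge 4$: in the path case take $m=k$; in the cycle case delete one vertex of the induced $C_k$ to obtain an induced $P_{k-1}$ with $k-1\ge 4$. By Theorem \ref{thm:forests}, any mixed graph with underlying graph $P_m$ is $H$-cospectral with the undirected $P_m$, whose eigenvalues are $2\cos\bigl(\pi j/(m+1)\bigr)$ for $j=1,\dots,m$. Such a cosine vanishes only when $2j=m+1$, which admits at most one solution $j\in\{1,\dots,m\}$; hence the $H$-rank of this induced submatrix is at least $m-1\ge 3$. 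By Lemma \ref{lem:rank subgraph} this forces $\rank(H(D))\ge 3$, contradicting the hypothesis.

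I do not expect serious obstacles. The trace-zero observation settles (a) and (b) simultaneously, and the small combinatorial trick of reducing an induced long cycle to an induced long path by vertex deletion lets Theorem \ref{thm:forests} handle both subcases of (c) uniformly, sidestepping any need for a separate eigenvalue analysis of mixed cycles.
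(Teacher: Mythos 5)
Your proposal is correct and follows essentially the same route as the paper: the zero-trace observation gives (a) and (b), and (c) is reduced to an induced path of length at least $4$ handled via Theorem \ref{thm:forests} and Lemma \ref{lem:rank subgraph}. The only cosmetic difference is that the paper reduces everything to $P_4$ (rank $4$) rather than invoking the eigenvalue formula for general $P_m$.
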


\begin{proof}
Since the trace of $H(D)$ is 0, $D$ has one negative and one positive eigenvalue and they have the same magnitude. This holds for each connected component, so we immediately see that (a) holds. Note that, in particular, no mixed graph has rank equal to 1. Thus, Lemma \ref{lem:rank subgraph} implies (b). In order to prove (c), it suffices to exclude the path $P_4$ since longer paths and long induced cycles contain induced $P_4$.

Suppose that $G(D)$ has an induced path on four vertices. By Theorem \ref{thm:forests}, every mixed graph whose underlying graph is $P_4$ has the same $H$-spectrum as $P_4$, which has rank 4. By Lemma \ref{lem:rank subgraph}, $D$ has $H$-rank at least 4, a contradiction.
\end{proof}

Let us now classify connected mixed graphs of order 3 and 4 whose $H$-rank is 2. We will say that a mixed graph whose underlying graph is the 3-cycle $C_3$ is an \emph{odd triangle} if it has an odd number of arcs.

\begin{lemma}
\label{lem:order 3 rank 2}
A connected mixed graph of order $3$ has rank $2$ if and only if either $G(D)$ is a path, or $D$ is an odd triangle.
\end{lemma}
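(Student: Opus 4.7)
The plan is to split into two cases according to the underlying graph $G(D)$, which for a connected mixed graph on three vertices must be either the path $P_3$ or the triangle $C_3$. Before entering the cases, I would make a preliminary observation: since $D$ is connected with three vertices it has at least two edges, so $H(D)\neq 0$; and since $H(D)$ is Hermitian with $\operatorname{tr}H(D)=0$, its rank cannot equal $1$, because a rank-one Hermitian matrix has a single nonzero real eigenvalue, which would then be forced to equal its trace. Therefore $\operatorname{rank}H(D)\in\{2,3\}$, and having $H$-rank $2$ is equivalent to $\det H(D)=0$.

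For the path case $G(D)=P_3$, Theorem~\ref{thm:forests} applies directly: every mixed graph whose underlying graph is a forest is $H$-cospectral with that forest, and $P_3$ has eigenvalues $\pm\sqrt{2}$ and $0$, so $D$ automatically has $H$-rank $2$. This handles the ``path'' alternative of the statement in both directions.

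For the triangle case $G(D)=C_3$, I would compute $\det H(D)$ by hand. Writing $a=h_{12}$, $b=h_{13}$, $c=h_{23}$ for the upper-triangular entries, expansion along the first row gives
\[
\det H(D) \;=\; ac\overline{b} + \overline{a}\,\overline{c}\,b \;=\; 2\operatorname{Re}\bigl(ac\overline{b}\bigr).
\]
Each of $a,b,c$ lies in $\{1,i,-i\}$: an undirected edge contributes $1$, while an arc contributes $+i$ or $-i$ according to its orientation relative to the vertex labelling. Hence $ac\overline{b}\in\{\pm 1,\pm i\}$, and its real part vanishes if and only if $ac\overline{b}\in\{\pm i\}$, which happens precisely when an odd number of the three factors is purely imaginary, i.e., when an odd number of the three edges of the triangle is an arc. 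That is exactly the odd-triangle condition.

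The only non-routine step is recognising that for a $3\times 3$ Hermitian matrix with zero diagonal the determinant reduces to twice the real part of the product of the Hermitian edge weights traversed around the triangle, so that realness of this cyclic product detects the parity of the number of arcs. Once that is in hand, combining the two cases yields both directions of the equivalence.
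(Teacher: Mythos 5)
Your proof is correct and takes essentially the same route as the paper: the $P_3$ case is dispatched by Theorem \ref{thm:forests}, and the triangle case by checking which orientations give a singular matrix. The paper merely asserts the triangle computation with a citation, whereas you supply it explicitly via $\det H(D)=2\operatorname{Re}(ac\overline{b})$ together with the observation that the rank cannot be $0$ or $1$; both steps check out.
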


\begin{proof}
By Theorem \ref{thm:forests}, all mixed graphs with $G(D)$ isomorphic to $P_3$ are cospectral to $P_3$ and thus of rank 2. Since $D$ is connected we may thus assume that $G(D)$ is a triangle. It is easy to see (cf. \cite{GuoMo}) that odd triangles have $H$-rank 2 and that even triangles have rank~3.
\end{proof}

For order 4 we will need a special $4\times 4$ matrix.

\begin{lemma}
\label{lem:special matrix rank 2}
For $x,y,z\in\{0,\pm i\}$, let $H(x,y,z)$ be the following matrix:
$$
   H(x,y,z) = \left[
                \begin{array}{cccc}
                  0 & 1 & 1 & 1 \\
                  1 & 0 & x & -y \\
                  1 & -x & 0 & z \\
                  1 & y & -z & 0 \\
                \end{array}
              \right]
$$
The rank of $H$ is equal to $2$ if and only if either $x=y=z=0$, or one of $x,y,z$ is equal to $0$, another one to $i$ and the remaining one to $-i$.
\end{lemma}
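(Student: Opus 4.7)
The plan is to reduce the rank condition to a single linear equation in $x$, $y$, $z$ and then enumerate the solutions in $\{0, \pm i\}^3$. First I would note that $H(x,y,z)$ is Hermitian for every admissible choice, because each element of $\{0, \pm i\}$ satisfies $\bar{t} = -t$; this matches the sign pattern $(x, -x)$, $(-y, y)$, $(z, -z)$ in the off-diagonal entries.

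Write the rows of $H(x,y,z)$ as $r_1, r_2, r_3, r_4$. Since $r_1 = (0,1,1,1)$ and $r_2 = (1,0,x,-y)$ disagree in the first coordinate (one is zero, the other is nonzero), they are linearly independent, so the rank is at least $2$ regardless of $x, y, z$. Therefore the rank equals $2$ precisely when both $r_3$ and $r_4$ belong to $\mathrm{span}(r_1, r_2)$.

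Writing $r_3 = \alpha r_1 + \beta r_2$ and comparing coordinate by coordinate, positions $1$ and $2$ force $\beta = 1$ and $\alpha = -x$; position $3$ is then automatic; position $4$ yields the single nontrivial condition $z = -x - y$, that is, $x + y + z = 0$. The analogous calculation for $r_4 = \gamma r_1 + \delta r_2$ forces $\delta = 1$, $\gamma = y$, has position $4$ automatically satisfied, and yields the same equation $x + y + z = 0$ from position $3$. Hence $\mathrm{rank}\, H(x,y,z) = 2$ if and only if $x + y + z = 0$.

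The final step is a short case check: the only triples $(x,y,z) \in \{0, \pm i\}^3$ summing to $0$ are the all-zero triple and the six permutations of $(0, i, -i)$, matching exactly the two cases in the statement. I do not anticipate a real obstacle here — the only thing to watch is the sign convention in positions $(2,4)$ and $(3,4)$, which is what ensures both dependency conditions collapse to the same linear equation rather than two different ones.
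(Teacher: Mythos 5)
Your proof is correct, and it takes a different route from the paper for the main step. The paper computes the characteristic polynomial $\phi(H,t)=t^4+(x^2+y^2+z^2-3)t^2-(x+y+z)^2$ and reads off that the rank is $2$ exactly when the constant term vanishes (the $t^2$-coefficient is always negative for $x,y,z\in\{0,\pm i\}$, so the rank never drops below $2$), whereas you work directly in the row space: $r_1,r_2$ are visibly independent, and expressing $r_3$ and $r_4$ in $\mathrm{span}(r_1,r_2)$ collapses, thanks to the antisymmetric sign pattern, to the single equation $x+y+z=0$ in both cases. Your computations check out (positions $3$ of $r_3$ and $4$ of $r_4$ are indeed automatic, and both dependency conditions give $x+y+z=0$), and the concluding enumeration of triples in $\{0,\pm i\}^3$ summing to zero is the same as the paper's. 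Your argument is more elementary and self-contained, avoiding the quartic determinant computation; the paper's characteristic-polynomial approach costs a bigger computation but also delivers the nonzero eigenvalues $\pm\sqrt{3-x^2-y^2-z^2}$ as a byproduct, which fits the spectral flavour of the surrounding section even though the lemma itself does not need it.
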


\begin{proof}
It is easy to compute that the characteristic polynomial of $H$ is
$$
   \phi(H,t) = t^4 + (x^2+y^2+z^2-3)t^2 - (x+y+z)^2.
$$
Thus, the rank is 2 if and only if $x+y+z=0$. One solution to this is when all of them are 0. If they are all $\pm i$, their sum is an odd multiple of $i$, thus nonzero. So, one of them must be 0. Then the other two sum to 0, meaning one is the negative of the other. This completes the proof.
\end{proof}

Any mixed graph based on $K_{1,3}$ has rank 2.
Figure \ref{fig:C4 rank 2} shows all mixed graphs of $H$-rank 2 whose underlying graph is $K_{2,2}$ (the 4-cycle), and Figure \ref{fig:K4-rank 2} shows all mixed graphs of rank 2 that are obtained from the graph $K_4^-$ ($K_4$ minus an edge). In either of the three possibilities, they are all switching equivalent to each other. This gives all connected mixed graphs of $H$-rank 2 on four vertices.

\begin{figure}
\centering
\includegraphics[scale=1.0]{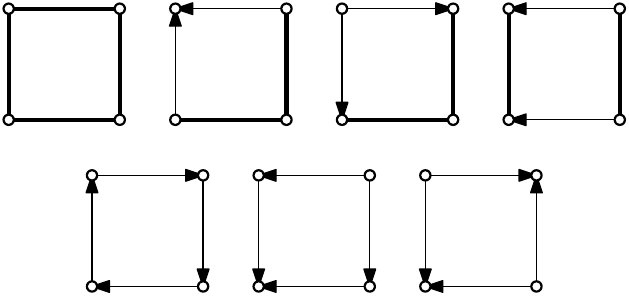}
\caption{Mixed graphs of $H$-rank 2 based on $C_4$.}

\label{fig:C4 rank 2}
\end{figure}

\begin{figure}
\centering
\includegraphics[scale=1.2]{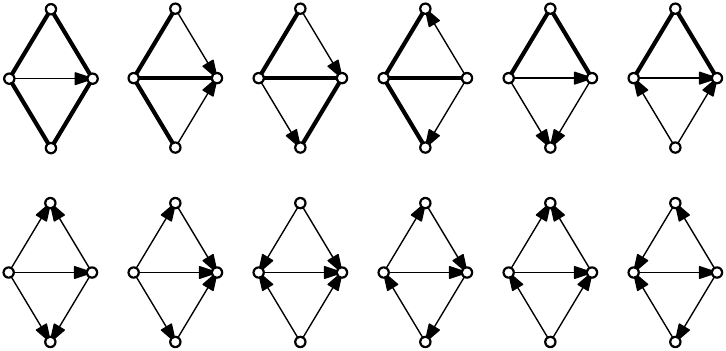}
\caption{Mixed graphs of $H$-rank 2 based on $K_4^-$.}
\label{fig:K4-rank 2}
\end{figure}

\begin{lemma}
\label{lem:order 4 rank 2}
Let $D$ be a connected mixed graph of order $4$. Then $D$ has $H$-rank $2$ if and only if one of the following holds:
\begin{itemize}
\setlength\itemsep{0pt}
\item[\rm (a)]
$G(D)$ is isomorphic to $K_{1,3}$.
\item[\rm (b)]
$G(D)$ is isomorphic to $C_4$ and $D$ is one of the mixed graphs shown in Figure \ref{fig:C4 rank 2}.
\item[\rm (c)]
$G(D)$ is isomorphic to $K_4^-$ and $D$ is one of the mixed graphs shown in Figure \ref{fig:K4-rank 2}.
\end{itemize}

\end{lemma}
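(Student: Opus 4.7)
The plan is to first prune the candidates for $G(D)$ using Lemma \ref{lem:rank2 basic properties}(c): it forbids induced $P_4$ and induced cycles of length $\ge 5$, so on four vertices the only connected candidates are $K_{1,3}$, $C_4$, the paw (triangle plus a pendant edge), $K_4^-$, and $K_4$. The case $G(D)=K_{1,3}$ follows at once from Theorem \ref{thm:forests}, so my task is to rule out the paw and $K_4$ and to enumerate the rank-$2$ orientations of $C_4$ and $K_4^-$.

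For the three triangle-containing graphs, each has a vertex $v$ adjacent to the remaining three. I would apply a four-way switching with diagonal matrix $S$ satisfying $S_{vv}=1$ and $S_{jj}=1/H_{vj}$ for each neighbour $j$ of $v$; this normalises every entry from $v$ to $+1$. The subtle step, which I expect to be the main obstacle, is admissibility: if some off-$v$ entry after normalisation were $-1$, the corresponding triangle $v,j,k$ would contain exactly two arcs and hence be even, contradicting Lemmas \ref{lem:rank2 basic properties}(b) and \ref{lem:order 3 rank 2}. Once normalised, the matrix takes the shape of $H(x,y,z)$ of Lemma \ref{lem:special matrix rank 2}, where $x,y,z$ correspond to the three potential edges among $V(D)\setminus\{v\}$ (set to $0$ when absent); the odd-triangle constraint also forces every nonzero coordinate to lie in $\{\pm i\}$. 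By Lemma \ref{lem:special matrix rank 2}, rank $2$ holds iff $x+y+z=0$. This sum cannot vanish for the paw (only one coordinate is nonzero) or for $K_4$ (three coordinates in $\{\pm i\}$ sum to an odd imaginary number), so both graphs are excluded. For $K_4^-$ the single zero coordinate corresponds to the missing edge and the remaining two must be additive inverses, producing exactly the configurations of Figure \ref{fig:K4-rank 2}.

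For $C_4$, I would label the four cyclic edge weights $a,b,c,d \in \{1,\pm i\}$. A short cofactor expansion gives $\det H(D) = 2\bigl(1-\mathrm{Re}(abcd)\bigr)$, so the rank is $2$ iff $abcd=1$. The cyclic product $abcd$ is preserved by every four-way switching, and for each solution the diagonal $s=(1,a,ab,abc)$ defines an admissible switching from the undirected $C_4$ — admissibility is automatic because opposite vertices in $C_4$ are non-adjacent. Consequently the rank-$2$ orientations of $C_4$ form a single switching class, listed in Figure \ref{fig:C4 rank 2}.
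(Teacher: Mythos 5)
Your proposal is correct, and for the hard part of the lemma it takes a genuinely different route from the paper. The paper also prunes the candidate underlying graphs to $K_{1,3}$, $C_4$, the paw, $K_4^-$ and $K_4$, but it then excludes the paw by an ad hoc row-space argument, cites \cite{GuoMo} for the $C_4$ classification, and handles $K_4^-$ and $K_4$ by a fairly long combinatorial case analysis: it repeatedly applies explicit directed and mixed $2$-way switchings to reduce an arbitrary orientation to one in which some vertex is incident with three undirected edges, and only then invokes Lemma \ref{lem:special matrix rank 2}. Your single diagonal normalisation $S_{vv}=1$, $S_{jj}=1/H_{vj}$ short-circuits all of that: the key observation that the product of Hermitian entries around an odd triangle is $\pm i$ (so the normalised off-$v$ entries land in $\{0,\pm i\}$ and never in $\{\pm 1\}$) puts every paw-, $K_4^-$- and $K_4$-based candidate directly into the form $H(x,y,z)$, and the condition $x+y+z=0$ then disposes of the paw and $K_4$ and pins down $K_4^-$ in one stroke. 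Your determinant computation $\det H = 2\bigl(1-\mathrm{Re}(abcd)\bigr)$ for $C_4$ is likewise self-contained where the paper only cites \cite{GuoMo}. Two cosmetic points: with the paper's convention $H'_{uv}=H_{uv}S_{vv}/S_{uu}$ your $C_4$ diagonal should be $(1,\bar a,\overline{ab},\overline{abc})$ rather than $(1,a,ab,abc)$; and admissibility there is automatic not because opposite vertices are non-adjacent but because the transformation sends every cycle entry to $+1$ by construction. Neither affects the argument, and your reliance on the figures for the final list of configurations is at the same level of detail as the paper's.
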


\begin{proof}
Suppose first that $G(D)$ has no triangles. Then it is bipartite. If it is not $P_4$ (which gives rank 4), it is either $K_{1,3}$ or $K_{2,2}$. Each mixed graph based on $K_{1,3}$ is cospectral to $K_{1,3}$ (Theorem \ref{thm:forests}), so it is of rank 2. If $D$ is based on $K_{2,2} = C_4$, then it is of rank 2 if and only if it is switching equivalent to $C_4$, which means it is one of the mixed graphs shown in Figure \ref{fig:C4 rank 2}. There are two other switching classes but their rank is bigger than 2 (see \cite{GuoMo}).

We may henceforth assume that $G(D)$ contains a triangle. By Lemma \ref{lem:order 3 rank 2}, this must be an odd triangle, call it $T$. The fourth vertex $v$ is adjacent to one of the vertices of $T$. If $v$ has only one neighbor in $T$, then it is easy to see that $H(D)$ has rank at least 3. (The rows of $H(D)$ corresponding to the vertices not adjacent to $v$ are linearly independent but they cannot generate the row of the third vertex which is adjacent to $v$.) If $v$ is adjacent to two or three vertices in $T$, then $G(D)$ is isomorphic to $K_4^-$ or to $K_4$.

Suppose first that $D$ has a vertex which is incident with three undirected edges. Since every triangle in $D$ is odd, the edges between the neighbors of this vertex are all directed. Thus, $H(D)$ is equal to the matrix $H$ in Lemma \ref{lem:special matrix rank 2}, where each of $x,y,z$ is 0 or $\pm i$. By the lemma, we see that one of the three, say $z$ must be 0 and that $x=-y$. Since $G(D)\ne K_{1,3}$, this means that $x\ne0$ and thus $G(D)=K_4^-$. Moreover, since $x=-y$, the two arcs among the neighbors of $v$ are both incoming or both outgoing from one of the neighbors to the other two (see the second and the fourth graph in Figure \ref{fig:K4-rank 2}).

Let $uv$ be an edge of $D$, where $u$ and $v$ both have degree 3 in $G(D)$. Let $w,z$ be the other two vertices, and let $H_v,H_u,H_w,H_z$ be the corresponding rows of $H(D)$. Suppose first that $uv$ is undirected. Assuming that $D$ has $H$-rank 2, the triangle $uvw$ is odd, and we may assume that the edge $uw$ is undirected and $vw$ or $wv$ is directed. By passing to the converse mixed graph if necessary, we may assume that $vw$ is an arc in $D$. Excluding the case from the previous paragraph, the edge $vz$ is undirected. If the edge $wz$ is present, then the triangle $vwz$ is odd and hence this edge is undirected. In any case ($wz$ present or not), $z$ is incident with only one directed edge. By a mixed 2-way switching we can change the directed edge into an undirected one. After this, $u$ becomes incident with three undirected edges, and we are done by the previous paragraph.

From now on we may assume that every edge joining two vertices of degree 3 in $G(D)$ is directed in $D$. Let $uv$ be such an arc. If $u$ has no incoming arcs, then we can make $uv$ undirected by a mixed 2-way switching. Similarly, if $v$ has no outgoing arcs.
If $G(D)=K_4$, then all edges are directed and by the above, each vertex has positive indegree and outdegree. Clearly, there is a vertex $u$ whose outdegree is 1. If $v$ is its outneighbor, we can switch at $v$, exchanging its in- and outgoing edges. This makes $u$ having indegree 3, and we are done since we can switch to a previously treated case. The same can be done if $G(D)=K_4^-$ and all edges of $D$ are directed.

It remains to treat the case when $G(D)=K_4^-$, $uv$ is an arc, and there is an undirected edge. We may assume that $wu$ is an incoming arc to $u$. Since there is an undirected edge and all triangles are odd, the edges $uz$ and $zv$ are undirected and $vw$ (or $wv$) is an arc. But then we can switch at $z$ and obtain the previous situation where every edge is directed. This completes the proof.
\end{proof}

\begin{lemma}
\label{lem:complete bi or tripartite rank 2}
If $D$ is a connected mixed graph of\/ $H$-rank $2$, then $G(D)$ is either a complete bipartite or a complete tripartite graph.
\end{lemma}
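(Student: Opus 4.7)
The plan is to combine Lemma \ref{lem:rank2 basic properties}(c) with a sharper exclusion obtained by applying Lemma \ref{lem:order 4 rank 2} to every induced $4$-vertex subgraph of $D$. By Lemma \ref{lem:rank subgraph}, any such induced mixed subgraph has $H$-rank at most $2$, and since a non-zero Hermitian matrix with zero trace cannot have rank $1$, every induced $4$-vertex subgraph has rank $0$ or $2$. In particular, if four vertices of $D$ induce a mixed subgraph whose underlying graph is connected on four vertices, then that underlying graph must appear in the list $K_{1,3}, C_4, K_4^-$ of Lemma \ref{lem:order 4 rank 2}. Consequently, $G(D)$ contains no induced $K_4$ and no induced \emph{paw} (the triangle with a pendant edge attached), since neither graph is in the list.

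With these two forbidden subgraphs in hand, I would split into cases according to whether $G(D)$ contains a triangle. If $G(D)$ is triangle-free, then Lemma \ref{lem:rank2 basic properties}(c) forbids every induced $C_k$ with $k \ge 5$, so $G(D)$ contains no odd cycles whatsoever and is therefore bipartite. The absence of induced $P_4$ combined with connectivity forces $G(D)$ to be complete bipartite: any non-adjacent pair of vertices in opposite parts would admit a shortest (odd, length $\ge 3$) path whose first four vertices, by the bipartition, form an induced $P_4$.

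Now suppose $G(D)$ contains a triangle $T = \{u, v, w\}$. For each vertex $x \notin T$ I would determine $|N(x) \cap T|$. The value $3$ is impossible since it creates an induced $K_4$, and the value $1$ is impossible since it creates an induced paw on $\{x\} \cup T$. The value $0$ is ruled out by a shortest-path argument: along a shortest path $x = y_0, y_1, \ldots, y_\ell \in T$ from $x$ to $T$, the vertex $y_{\ell-1}$ lies outside $T$, has at least one neighbor in $T$ (namely $y_\ell$), and so, by the above, has exactly two neighbors in $T$, say $y_\ell$ and $t'$; if $\ell \ge 2$ then $y_{\ell-2}$ is non-adjacent to $T$ by minimality, and $\{y_{\ell-2}, y_{\ell-1}, y_\ell, t'\}$ induces a paw, while $\ell \le 1$ contradicts $N(x) \cap T = \varnothing$. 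Hence every $x \notin T$ is adjacent to exactly two vertices of $T$, and I partition $V(D) \setminus T$ into classes $A, B, C$ according to which single vertex of $T$ a vertex misses. A short forbidden-subgraph check then shows that each of $A \cup \{u\}, B \cup \{v\}, C \cup \{w\}$ is independent (an edge inside $A$, together with $u$ and $v$, would induce a paw) and that any two vertices from different classes are adjacent (otherwise an appropriate edge of $T$ would extend to an induced $P_4$). This exhibits $G(D)$ as the complete tripartite graph with parts $A \cup \{u\}$, $B \cup \{v\}$, $C \cup \{w\}$, with the complete bipartite case recovered when one part is empty.

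The main obstacle is arranging the zero-neighbors-in-$T$ case so that the shortest-path data really yields a paw; once this is organized correctly, the partition into $A, B, C$ and the verification of the complete tripartite structure are routine applications of the $P_4$-free and paw-free conditions.
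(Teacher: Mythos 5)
Your proof is correct. The triangle-free half coincides with the paper's argument (a shortest odd cycle would be induced, and a shortest path between nonadjacent vertices in opposite parts yields an induced $P_4$). In the triangle case both proofs rest on the same forbidden configurations extracted from Lemma \ref{lem:order 4 rank 2} --- no induced $K_4$ and no induced paw, i.e.\ a vertex meeting a triangle meets it in exactly two vertices --- but organize them differently: the paper takes a \emph{maximal} induced complete tripartite subgraph $Q$ containing a triangle $T$ and shows that any vertex adjacent to $Q$ could be absorbed into $Q$, contradicting maximality, whereas you fix a single triangle $T$, eliminate vertices with no neighbour in $T$ by a shortest-path-to-$T$ argument producing an induced paw, and then read off the tripartition directly from which vertex of $T$ each remaining vertex misses. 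Your version needs that extra step because the paper's extremal choice of $Q$ silently disposes of the ``no neighbour in $T$'' case (a vertex adjacent to $Q$ is automatically adjacent to some triangle of $Q$), but it is more explicit and equally valid; both routes verify the final adjacencies with the same induced-paw and induced-$P_4$ exclusions from Lemma \ref{lem:rank2 basic properties}(c).
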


\begin{proof}
Let $G=G(D)$. Suppose that $G$ has no triangles. Then $G$ is bipartite since otherwise, a shortest odd cycle in $G$ would be induced and would contradict Lemma \ref{lem:rank2 basic properties}(c). A shortest path between any two nonadjacent vertices in opposite parts of the bipartition would induce a path on at least 4 vertices. Since $G$ has no induced $P_4$, there are no such nonadjacent vertices. Since it contains at least one edge, it is necessarily a complete bipartite graph.

Suppose now that $G$ contains a triangle $T=uvw$. Let $Q$ be the largest induced complete tripartite subgraph of $G$ containing $T$. If $Q\ne G$, then there is a vertex $z\in V(G)\setminus V(T)$ that is adjacent to $Q$. We may assume that it is adjacent to $T$. By Lemma \ref{lem:order 4 rank 2}, $z$ is adjacent to precisely two vertices in $T$, say to $u$ and $v$. Let $A,B,C$ be the classes of $Q$, where $u\in A$, $v\in B$, and $w\in C$. Considering any triangle $uvw'$, $w'\in C$, we see by Lemma \ref{lem:order 4 rank 2} that $z$ is not adjacent to $w'$. Considering all triangles $u'vw$ and $uv'w$ ($u'\in A$, $v'\in B$), we see that $z$ is adjacent to every $u'\in A$ and $v'\in B$. But then we can add $z$ to $Q$ to obtain a larger complete tripartite graph, contradicting that $Q$ was maximal possible.
\end{proof}

Two vertices $u,v$ in $D$ are \emph{twins} if $D$ is switching equivalent to a mixed graph $D'$ in which $u$ and $v$ have exactly the same neighborhood. More precisely, for each $w\in V(D')$, we have $vw\in E(D')$ if and only if $uw\in E(D')$, and $wv\in E(D')$ if and only if $wu\in E(D')$. By removing or adding twins the rank of the Hermitian adjacency matrix remains the same (but the $H$-spectrum changes). The relation of being a twin of each other is an equivalence relation on $V(D)$. Let $[v]$ denote the equivalence class containing the vertex $v$. It is easy to see that $D$ is switching equivalent to a mixed graph $D'$ such that for each $u,v\in V(D')$ and for any $u'\in[u]$, $v'\in[v]$, we have $u'v'\in E(D')$ if and only if $uv\in E(D')$. This enables us to define the quotient mixed graph $T(D)$ whose vertices are the equivalence classes, $V(T(D))=\{[v]\mid v\in V(D)\}$, and $[u] [v]\in E(T(D))$ if $uv\in E(D')$. Note that $D'$ is determined only up to switching equivalence, and thus also $T(D)$ is determined only up to switching equivalence. We say that $T(D)$ has been obtained from $D$ by \emph{twin reduction}. The following observation, whose proof is left to the reader, enables us to assume that there are no twins when classifying mixed graphs of a fixed rank.

\begin{lemma}
\label{lem:twin reduction}
Let $D_1$ and $D_2$ be mixed graphs with the same underlying graph. Then they are switching equivalent if and only if $T(D_1)$ and $T(D_2)$ are switching equivalent.
\end{lemma}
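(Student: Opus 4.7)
The plan is to pass to canonical representatives and reduce switching equivalence of $D_1$ and $D_2$ to switching equivalence of their twin quotients via a single compatible similarity. By the definition of twins I may replace $D_1$ and $D_2$ by switching-equivalent mixed graphs $D_1'$ and $D_2'$ in which every vertex of each twin class shares its entire in- and out-neighbourhood with the other vertices of its class; then $T(D_j)$ is literally the quotient mixed graph obtained from $D_j'$ by identifying the vertices of each class. The crucial tool is the matrix form of switching equivalence extracted from the proof of Proposition \ref{prop:equivalence}: any sequence of switchings and converses collapses into a single diagonal conjugation (possibly post-composed with complex conjugation), so $D_1' \sim D_2'$ is equivalent to the existence of $S \in \cS$ with $H(D_2') = S^{-1} H(D_1') S$ or $H(D_2') = \overline{S^{-1} H(D_1') S}$, the induced partition being admissible.

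For the forward direction, I assume $D_1 \sim D_2$, pick such an $S$, and argue that $S$ can be chosen constant on every twin class. Fix a class $[v]$ with $u,v \in [v]$ and any $w \in V$. Twinness in both canonical forms gives $H(D_1')_{uw} = H(D_1')_{vw}$ and $H(D_2')_{uw} = H(D_2')_{vw}$, while the similarity forces $H(D_2')_{uw} = H(D_1')_{uw}\, S_{ww}/S_{uu}$ and likewise for $v$. Dividing, any common neighbour $w$ of the class forces $S_{uu} = S_{vv}$; and if the class is totally isolated, the values of $S$ on it do not affect the similarity at all and can be redefined to be constant. The converse-composed case is handled identically, since complex conjugation is an involution on $\cI$. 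Consequently $S$ descends to a diagonal matrix $\tilde S \in \cS$ on $V(T(D_1))$ with $\tilde S_{[v][v]} = S_{vv}$, and the induced similarity realises $T(D_1) \sim T(D_2)$; admissibility on the quotient is inherited class-pair by class-pair from admissibility of $S$ on $D_1'$, because between two classes all edges are of the same type.

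For the reverse direction, I lift: given a witness $\tilde S \in \cS$ for $T(D_1) \sim T(D_2)$, define $S \in \cS$ on $V(D_1')$ by $S_{vv} := \tilde S_{[v][v]}$. Since vertices of the same class have identical neighbourhoods in $D_1'$ (and, by the no-loop/no-digon convention together with the definition of twins, are mutually non-adjacent, so entries within a class are zero to begin with), every admissibility condition for $S$ on $D_1'$ coincides with one for $\tilde S$ on $T(D_1)$, and the switched matrix $S^{-1} H(D_1') S$ matches $H(D_2')$ both within and between classes. Hence $D_1' \sim D_2'$ and therefore $D_1 \sim D_2$. The main obstacle is the forward step: one must show that \emph{some} witnessing $S$ can be taken constant on each twin class, and for this one first compresses the sequence of switchings and converses into a single similarity via Proposition \ref{prop:equivalence}, since intermediate switchings need not respect twin classes; once that reduction is in hand, everything follows from the entrywise computation displayed above.
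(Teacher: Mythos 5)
The paper offers no proof to compare against: Lemma~\ref{lem:twin reduction} is explicitly ``left to the reader.'' Your argument correctly supplies the missing details, and it uses exactly the machinery the surrounding text sets up: pass to representatives $D_1'$, $D_2'$ in which each twin class has literally identical neighbourhoods, collapse any chain of switchings and converses into a single similarity $H(D_2')=S^{-1}H(D_1')S$ or its conjugate via the computation in Proposition~\ref{prop:equivalence}, observe that admissibility of the collapsed $S$ is automatic because the result has no $-1$ entries, and then show $S$ is constant on non-isolated twin classes by comparing the $(u,w)$ and $(v,w)$ entries for a common neighbour $w$, so that $S$ descends to the quotient; the lift in the converse direction is the same computation read backwards. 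One point you leave implicit and should state: the two mixed graphs having the same underlying graph does \emph{not} by itself force their twin partitions to coincide (on $C_4$, the undirected cycle has two twin classes of size two, while a mixed graph whose cycle entries multiply to $-1$ has none), so your forward step needs the remark that twinness is a switching-invariant property (hence $D_1\sim D_2$ forces equal partitions), and your backward step needs the remark that $T(D_1)\sim T(D_2)$ forces the two quotients to have the same vertex set, i.e.\ the same set of classes, before the lifted $S$ can be matched against $H(D_2')$. With that sentence added, the proof is complete and consistent with how the lemma is applied in Theorem~\ref{thm:rank 2}.
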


Let $a\le b\le c$ be positive integers. We will denote by $\vec{C}_3(a,b,c)$ the complete tripartite mixed graph with parts $A$, $B$, $C$, where $|A|=a$, $|B|=b$ and $|C|=c$, and with all arcs from $A$ to $B$, all arcs from $B$ to $C$, and all arcs from $C$ to $A$. See Figure \ref{fig:cospectralrank2} for an example. Note that all vertices in $A$, $B$ and $C$ (respectively) are twins, and thus the twin reduction gives a mixed graph isomorphic to the directed 3-cycle $\vec{C}_3$.

\begin{figure}
\centering
\includegraphics[scale=2.0]{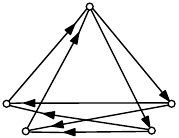}
\caption{$\Cabc{1,2,2}$}
\label{fig:cospectralrank2}
\end{figure}

\begin{theorem}
\label{thm:rank 2}
Let $D$ be a mixed graph of order $n$ whose $H$-rank is equal to $2$ and let $\rho$ be its positive eigenvalue. Then $D$ is switching equivalent either to $K_{a,b}\cup tK_1$ or to $\vec{C}_3(a,b,c)\cup tK_1$, where $t\ge 0$. In the former case we have
\begin{equation}
\label{eq:1}
  n=a+b+t \quad \text{and} \quad \rho^2=ab,
\end{equation}
and in the latter case
\begin{equation}
\label{eq:2}
  n=a+b+c+t \quad \text{and} \quad \rho^2=ab+bc+ac.
\end{equation}
Conversely, for any $t\ge0$ and $1\le a\le b\le c$, the mixed graphs $K_{a,b}\cup tK_1$ and $\vec{C}_3(a,b,c)\cup tK_1$ have $H$-rank $2$ and they satisfy (\ref{eq:1}) and (\ref{eq:2}), respectively.
\end{theorem}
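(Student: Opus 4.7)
The plan is to use the earlier lemmas to reduce the problem to a case analysis on the underlying graph, then construct explicit switchings. By Lemma \ref{lem:rank2 basic properties}(a), $D$ decomposes as $D_0 \cup tK_1$, where $D_0$ is the unique nontrivial connected component; the isolated vertices affect only $n$, so it suffices to classify $D_0$. By Lemma \ref{lem:complete bi or tripartite rank 2}, $G(D_0)$ is either $K_{a,b}$ or $K_{a,b,c}$.

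For the bipartite case, writing
\[
H(D_0) = \begin{pmatrix} 0 & B \\ B^* & 0 \end{pmatrix}
\]
gives $\rank H = 2\rank B$, so $\rank B = 1$ and $B = \alpha\beta^*$ for some vectors $\alpha \in \CC^a$, $\beta \in \CC^b$. Since each entry of $B$ lies in $\cI$, one can rescale and adjust a common phase so that $\alpha \in \cI^a$ and $\beta \in \cI^b$; the diagonal matrix $S \in \cS$ with these entries then satisfies $(S^{-1}H(D_0)S)_{uv} = \overline{\alpha_u}\alpha_u\overline{\beta_v}\beta_v = 1$ for every $u \in A$, $v \in B$, so $D_0$ is switching equivalent to $K_{a,b}$.

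For the tripartite case, the crux is a proportionality argument. Any two rows $H_u, H_{u'}$ with $u, u' \in A$ both vanish on the columns indexed by $A$ (since $A$ is independent in $G(D_0)$), whereas the row $H_v$ of any $v \in B$ is nonzero in those columns. If $H_u$ and $H_{u'}$ were linearly independent, then $\{H_u, H_{u'}, H_v\}$ would be linearly independent as well, contradicting $\rank H = 2$. Hence $H_u = \alpha_u r_A$ with $\alpha_u \in \cI$, and analogously for $B$ and $C$. Switching with the diagonal matrix in $\cS$ carrying these phases turns the vertices within each part into strict twins, so by Lemma \ref{lem:twin reduction} the problem reduces to classifying a mixed graph on three vertices whose underlying graph is $K_3$. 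By Lemma \ref{lem:order 3 rank 2} this quotient is an odd triangle; a short case check shows that each of the three types (one arc plus two undirected edges, transitive tournament, and directed $3$-cycle) is switching equivalent to $\vec{C}_3$, possibly after taking the converse. Blowing the twins back up and cyclically relabelling so that $a \le b \le c$ yields $\vec{C}_3(a,b,c)$.

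The spectral relations then follow immediately: with rank $2$ and trace zero the nonzero $H$-eigenvalues are $\pm \rho$, so $2\rho^2 = \tr(H^2) = 2|E(D)|$, giving $\rho^2 = ab$ in the bipartite case and $\rho^2 = ab + bc + ac$ in the tripartite one. The converse direction is routine: $K_{a,b}$ has rank $2$ as a bipartite graph, and $\vec{C}_3(a,b,c)$ shares its $H$-rank with the twin quotient $\vec{C}_3$, a rank-$2$ odd triangle. The step requiring the most care is the tripartite case, where intermediate switchings can introduce forbidden $-1$ entries in the $3 \times 3$ quotient; these must be absorbed by further switchings (and possibly a converse) before the quotient can be interpreted as a genuine mixed-graph matrix to which Lemma \ref{lem:order 3 rank 2} applies.
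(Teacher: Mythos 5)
Your proposal is correct and follows essentially the same route as the paper: the heart of both arguments is that any two rows of $H(D)$ indexed by the same part of the multipartition must be proportional (otherwise, together with a row indexed by another part, they would force rank at least $3$), so each part consists of twins and twin reduction collapses $D$ to $K_2$ or to an odd triangle; your rank-one factorization of the off-diagonal block in the bipartite case is just a more explicit packaging of this. The worry you flag about intermediate $-1$ entries is legitimate but dispatches cleanly: all your switchings are conjugations by diagonal matrices in $\cS$ (possibly interleaved with converses), so by the group structure of $\cS$ their composite is a single such conjugation followed by at most one converse, and since the final matrix $H(K_{a,b})$ or $H(\Cabc{a,b,c})$ has no $-1$ entries, the composite partition is admissible and the whole reduction is one legitimate four-way switching.
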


\begin{proof}
By Lemma \ref{lem:rank2 basic properties}, $D$ has $t\ge0$ isolated vertices and a single nontrivial connected component. We may assume henceforth that $t=0$, so that $D$ is connected. By Lemma \ref{lem:complete bi or tripartite rank 2}, $G(D)$ is either a complete bipartite graph $K_{a,b}$ or a complete tripartite graph with parts $A$, $B$, $C$, where $|A|=a$, $|B|=b$ and $|C|=c$. By Lemma \ref{lem:twin reduction}, we may assume that $D$ has no twins, i.e., $D=T(D)$.

Let us first assume that $G(D)=K_{a,b}$, where $1\le a\le b$. If $a=b=1$, then $T(D)$ is switching equivalent to $K_2$, which gives the first outcome. Similarly, if $G(D)=K_{a,b,c}$ is a complete tripartite graph: If $a=b=c=1$, we have the second outcome.
Suppose now that $b>1$. For a vertex $x\in V(D)$, let $H_x$ denote its row in $H(D)$. Let $u$ be a vertex in the first part of the bi- or tripartition and let $v,v'$ be in the second part. Then $H_u$ and $H_v$ are linearly independent, and they generate the row space of $H(D)$. In particular, $H_{v'}=\alpha H_u + \beta H_v$. Note that the support of the vector $H_u$ contains $v$, while the supports of $H_{v'}$ and $H_v$ do not contain $v$. Therefore, $\alpha=0$ and hence $H_{v'}$ is a scalar multiple of $H_v$. In other words, $v'$ is a twin of $v$, a contradiction. This proves the first part of the theorem.

The first equality in (\ref{eq:1}) and (\ref{eq:2}) is obvious. The second one follows from the fact that $\tr(H^2) = 2\rho^2$ is twice the number of edges of $D$ (see \cite{GuoMo}).
The rest of the claims in the theorem are all easy to prove and are left to the reader.
\end{proof}

Let us observe that two mixed digraphs with $H$-rank 2 are cospectral if and only if they have the same number of edges. This will be used in the rest of this section.

As mentioned earlier, balanced complete bipartite graphs $K_{n,n}$ and $K_{n,n+1}$ are DS. However, they are not always DHS. For example, $K_{3,4}$ is cospectral with $\Cabc{2,2,2}$ and $K_{7,8}$ is cospectral with $\Cabc{8,4,2}$. Table \ref{tab:KnnDHS} shows some further examples.

\begin{table}
\label{tab:KnnDHS}
  \centering
\begin{tabular}{|l|l|}
  \hline
  $K_{2,2}$ & DHS \\
  $K_{3,3}$ & DHS \\
  $K_{4,4}$ & $\Cabc{3,2,2}$ \\
  $K_{5,5}$ & DHS \\
  $K_{6,6}$ & $\Cabc{6,3,2},\Cabc{8,2,2}$ \\
  $K_{7,7}$ & DHS \\
  $K_{8,8}$ & $\Cabc{6,4,4}$ \\
  $K_{9,9}$ & $\Cabc{7,6,3}$ \\
  $K_{10,10}$ & $\Cabc{11,6,2}$ \\
  $K_{11,11}$ & $\Cabc{10,7,3}$ \\
  $K_{12,12}$ & $\Cabc{8,8,5}, \Cabc{9,6,6}, \Cabc{12,6,4}, \Cabc{14,6,3}, \Cabc{16,4,4}$ \\
  \hline
\end{tabular}
  \caption{Some complete bipartite graphs and their cospectral mixed graphs}\label{tab:KnnDHS}
\end{table}

As a typical explanation for the entries in Table \ref{tab:KnnDHS}, let us prove that $K_{8,8}$ is cospectral with $\Cabc{6,4,4}$ and, up to switching equivalence, with no other mixed graphs. Other cases in Table \ref{tab:KnnDHS} are treated in the same way. The process shows that the complete tripartite digraphs listed in the table are all cospectral mates of the corresponding complete bipartite graphs (up to switching equivalence).

So, let us consider $K_{8,8}$ which has 64 edges. Since $K_{8,8}$ is DS, it is not cospectral to any other complete bipartite graph. By Theorem \ref{thm:rank 2}, $K_{8,8}$ is not DHS if and only if there are positive integers $a\ge b\ge c$ such that $ab+ac+bc=64$ and $a+b+c\le 16$ (in which case it is cospectral with $\Cabc{a,b,c}\cup (16-a-b-c)K_1$). From these conditions we see that $3c^2\le ab+ac+bc =64$, which yields that $c\le4$. The condition $ab+ac+bc=64$ can be rewritten as
$$
    (a+c)(b+c) = 64 + c^2.
$$
This means that $c^2 + 64$ can be factored in a product $xy$ so that
\begin{equation}
\label{eq:abc for K88}
   x+y = a+b+2c \le 16+c.
\end{equation}
Note that $c^2+64$ equals $65=5\cdot 13$ (for $c=1$), $68=4\cdot 17$ (for $c=2$), $73$ (for $c=3$), and $80=2^4\cdot 5= 5\cdot 16 = 10\cdot 8= 20\cdot 4 = 40\cdot 2$ (for $c=4$). Note that the only factorizations satisfying (\ref{eq:abc for K88}) is $80=10\cdot 8$ for $c=4$. This shows that the only cospectral case is $\Cabc{6,4,4}$.

In conclusion, we shall provide some families of mixed DHS graphs. We first make the following observation. If $K_{m,n}$ is cospectral with $\Cabc{a,b,c}$, then for every integer $t\ge1$, $K_{tm,tn}$ is cospectral with $\Cabc{ta,tb,tc}$. This implies:

\begin{proposition}
There are infinitely many integers $n$ with the property that $K_{n,n}$ is not DHS.
\end{proposition}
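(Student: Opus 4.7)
The plan is to combine the scaling observation made immediately before the proposition with a single base example from Table \ref{tab:KnnDHS}, so essentially no new machinery is needed. From that table, $K_{4,4}$ is $H$-cospectral with $\vec{C}_3(3,2,2) \cup K_1$: both are $H$-rank $2$ mixed graphs on $8$ vertices with $\rho^2 = 16$, since $4\cdot 4 = 16 = 3\cdot 2 + 2\cdot 2 + 3\cdot 2$, and by the remark following Theorem \ref{thm:rank 2} any two rank-$2$ mixed graphs with the same number of vertices and the same number of edges are cospectral.

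Next I would apply the scaling observation to this base pair: for every positive integer $t$, the graph $K_{4t,4t}$ is $H$-cospectral with $\vec{C}_3(3t, 2t, 2t) \cup t K_1$. (One can also check this directly from Theorem \ref{thm:rank 2}: both mixed graphs have $8t$ vertices, $H$-rank $2$, and $\rho^2 = 16 t^2 = 6t^2+4t^2+6t^2$.)

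Finally, I would invoke Proposition \ref{prop:equivalence}: switching equivalent mixed graphs must share the same underlying graph. But the underlying graph of $K_{4t,4t}$ is bipartite, while the underlying graph of $\vec{C}_3(3t,2t,2t) \cup tK_1$ contains a triangle and is therefore not bipartite. Hence these two cospectral mixed graphs are not switching equivalent, so $K_{4t,4t}$ is not DHS. Letting $t$ range over the positive integers produces infinitely many values of $n = 4t$ for which $K_{n,n}$ fails to be DHS.

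There is essentially no hard step here, since the entire content of the proposition is packed into the scaling observation together with one line of Table \ref{tab:KnnDHS}. The only point worth double-checking is that the scaling indeed preserves cospectrality after inserting the appropriate number of isolated vertices, which is immediate from the edge-count characterization of rank-$2$ cospectrality noted just after Theorem \ref{thm:rank 2}.
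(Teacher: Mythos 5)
Your proof is correct and follows essentially the same route the paper intends: it combines the scaling observation stated immediately before the proposition with a single base example from Table \ref{tab:KnnDHS}, and correctly notes that the underlying graphs (bipartite versus containing a triangle) rule out switching equivalence. The extra care you take in checking that the isolated vertices make the vertex counts (and hence the zero eigenvalue multiplicities) match is a welcome tightening of the paper's remark that rank-2 cospectrality is governed by the edge count.
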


This proposition also follows from the following.

\begin{proposition}
If $m$ is divisible by a square $p^2$ and $n>(p-1)m/p^2$, then $K_{m,n}$ is not DHS. More precisely, if $p^2\mid m$, then for any positive integers $a,b$ with $a+b=p$ and $n>abm/p^2$, $K_{m,n}$ is cospectral with $\Cabc{n-t,am/p, bm/p} \cup tK_1$, where $t=abm/p^2$.
\end{proposition}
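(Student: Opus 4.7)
The plan is to exhibit $\Cabc{n-t,\,am/p,\,bm/p}\cup tK_1$ as a cospectral mate of $K_{m,n}$ whose underlying graph is non-bipartite; since switching equivalence preserves the underlying graph (Proposition \ref{prop:equivalence}), this immediately shows that $K_{m,n}$ is not DHS. Both candidate graphs have $H$-rank $2$, so by the observation following Theorem \ref{thm:rank 2} (two rank-$2$ mixed graphs of the same order are cospectral iff they have equally many edges), the entire argument reduces to matching two numerical invariants: the order and $\rho^{2}$.

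First I would check that the parameters are well-defined. Set $a'=n-t$, $b'=am/p$, $c'=bm/p$. Since $p^{2}\mid m$, the quotient $m/p$ is a positive integer, so $b',c'\ge 1$, and $t=abm/p^{2}$ is a non-negative integer. The hypothesis $n>abm/p^{2}$ forces $a'\ge 1$, so $\Cabc{a',b',c'}\cup tK_1$ is a legitimate mixed graph of order
\[
 a'+b'+c'+t \;=\; (n-t)+(a+b)\tfrac{m}{p}+t \;=\; m+n,
\]
matching $|V(K_{m,n})|$.

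Next I would compute $\rho^{2}$ for each graph using Theorem \ref{thm:rank 2}. For $K_{m,n}$, $\rho^{2}=mn$. For the tripartite graph,
\[
 a'b'+a'c'+b'c' \;=\; (n-t)(b'+c')+b'c' \;=\; (n-t)m+\tfrac{abm^{2}}{p^{2}} \;=\; (n-t)m+tm \;=\; mn.
\]
Hence the two graphs have the same order and the same positive eigenvalue $\sqrt{mn}$, and are $H$-cospectral.

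Finally, $\Cabc{a',b',c'}\cup tK_1$ contains a triangle (all three tripartite parts are non-empty) while $K_{m,n}$ is triangle-free, so the two graphs cannot be switching equivalent. This establishes the ``more precisely'' part of the proposition; the first sentence is the special case $a=1,\,b=p-1$ (which requires $p\ge 2$, the case $p=1$ being vacuous as no such positive $a,b$ exist), for which $abm/p^{2}=(p-1)m/p^{2}$. I anticipate no substantive obstacle here: the proof is pure verification, and the only care needed is to track the integrality and positivity of $a',b',c',t$ and to confirm the identity $a'b'+a'c'+b'c'=mn$.
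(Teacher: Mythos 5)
Your proof is correct and follows essentially the same route as the paper's: verify that the parameters $n-t$, $am/p$, $bm/p$, $t$ are well-defined, check that the order is $m+n$ and the edge count is $mn$, and invoke the rank-2 classification to conclude cospectrality. Your explicit remarks that the tripartite mate is non-bipartite (hence not switching equivalent to $K_{m,n}$) and that the first sentence is the case $a=1$, $b=p-1$ are small additions the paper leaves implicit, but the substance is identical.
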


\begin{proof}
Let us first observe that all parameters are positive integers (where the condition $n>abm/p^2$ is needed to guarantee that $n-t>0$.
Next, we observe that $\Cabc{n-t,am/p, bm/p}$ has $mn$ edges, since
$$
   (n-t)(am/p+bm/p)+(am/p)(bm/p) = (n-t)m+abm^2/p^2 = mn-tm+tm = mn.
$$
Therefore, it has the same nonzero eigenvalues as $K_{m,n}$. Finally, by adding $t$ isolated vertices, the resulting mixed graph has precisely $n$ vertices, thus it is cospectral to $K_{m,n}$.
\end{proof}

The last proposition shows that $K_{n,n}$ is not DHS if $n$ is not square-free.
However, we were not able to decide whether there are infinitely many integers $n$ for which $K_{n,n}$ is DHS. As this seems very unlikely, we dare to conjecture the following.

\begin{conjecture}
There are only finitely many integers $m$ and $n$ for which the complete bipartite graph $K_{m,n}$ is DHS.
\end{conjecture}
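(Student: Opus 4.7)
The plan is to apply Theorem~\ref{thm:rank 2} and reduce the conjecture to a purely arithmetic statement. That theorem implies that $K_{m,n}$ fails to be DHS precisely when one of the following holds: (i) $ab=mn$ for positive integers $\{a,b\}\ne\{m,n\}$ with $a+b\le m+n$, yielding an undirected cospectral mate $K_{a,b}\cup tK_1$; or (ii) $ab+bc+ca=mn$ for positive integers $a,b,c$ with $a+b+c\le m+n$, yielding the tripartite cospectral mate $\Cabc{a,b,c}\cup tK_1$. The conjecture thus becomes the statement that (i) or (ii) can be achieved for all but finitely many pairs $(m,n)$.

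I would focus on condition (ii), which is by far the more flexible. Using the identity $ab+bc+ca=mn \Longleftrightarrow (a+c)(b+c)=mn+c^2$, for each $c\ge 1$ we look for a divisor $d$ of $mn+c^2$ with $c<d\le\sqrt{mn+c^2}$ and $d+(mn+c^2)/d\le m+n+c$. For $c=1$ this amounts to producing a divisor $d\ge 2$ of $mn+1$ in the window where $d^2-(m+n+1)d+(mn+1)\le 0$; the acceptable window has length $\sqrt{(m-n)^2+2(m+n)-3}$, centred near $\sqrt{mn+1}$, and widens as $m+n$ grows. Classical divisor-density heuristics suggest that a random integer $N$ has a divisor within distance $\ell$ of $\sqrt{N}$ with probability roughly $\ell/\sqrt{N}$ up to logarithmic factors, so for almost all $(m,n)$ some small $c$ should succeed.

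Concretely, I would first combine the preceding proposition, which already handles all pairs in which $m$ or $n$ has a square factor, with the $c=1$ analysis above applied to squarefree $(m,n)$, singling out the small squarefree pairs that might evade (ii). Then I would sweep over $c=2,3,\ldots$ up to roughly $\sqrt{mn}$ to cover the survivors, exploiting the plentiful supply of essentially independent shifts $mn+c^2$ whose divisor structures are only weakly correlated.

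The main obstacle is converting these divisor-density heuristics into a genuine finiteness statement. Showing that only finitely many pairs $(m,n)$ evade every choice of $c$ simultaneously is a quantitative question about divisors in short intervals, comparable in spirit to Ford's theorems on integers possessing a divisor in a prescribed range. Absent an algebraic trick that forces some particular $c$ to succeed on structural grounds, I expect the conjecture to require genuine input from analytic number theory---which is presumably why the authors record it here as a conjecture rather than a theorem.
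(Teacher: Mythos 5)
The statement you were asked to prove is labelled a \emph{conjecture} in the paper: the author explicitly writes that they ``were not able to decide'' the question and only ``dare to conjecture'' it, so the paper contains no proof to compare yours against. Your reduction of the problem is nonetheless correct and matches the paper's own framework: by Theorem~\ref{thm:rank 2} and the observation that two rank-$2$ mixed graphs are cospectral iff they have the same order and the same number of edges, $K_{m,n}$ fails to be DHS exactly when $mn=ab$ with $\{a,b\}\ne\{m,n\}$, $a+b\le m+n$, or $mn=ab+bc+ca$ with $a+b+c\le m+n$; and your identity $(a+c)(b+c)=mn+c^2$ is precisely the device the paper uses in its worked example for $K_{8,8}$ and in Corollary~\ref{cor:big DHS family}.

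The gap is the one you name yourself: the divisor-in-a-short-interval heuristic does not yield a finiteness statement, and nothing in your argument forces some admissible $c$ to succeed for all sufficiently large $(m,n)$. One further caution: for $m=n$ the window around $\sqrt{mn+c^2}$ has length only about $2\sqrt{n}$ against a centre of size about $n$, so the success probability for any \emph{single} $c$ tends to $0$; the heuristic case for the conjecture rests entirely on having roughly $\sqrt{mn/3}$ weakly correlated shifts $mn+c^2$ to try, and controlling the correlations among their divisor structures is exactly the hard analytic-number-theoretic content. So your proposal is an honest and well-aimed plan, but it proves nothing beyond what the paper already establishes (e.g.\ the preceding proposition covering all $m$ with a square factor), and the conjecture remains open.
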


To conclude, we observe that digraphs $\Cabc{a,b,c}$ are DHS in many circumstances.

\begin{corollary}
Digraphs $\Cabc{n,n,n}$, $\Cabc{n,n,n+1}$, and $\Cabc{n-1,n,n}$ are DHS for every~$n$.
\end{corollary}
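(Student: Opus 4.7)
My plan is as follows. Since each of $\Cabc{n,n,n}$, $\Cabc{n,n,n+1}$, and $\Cabc{n-1,n,n}$ has $H$-rank $2$, any mixed graph $D$ cospectral to one of them has $H$-rank $2$ as well. By Theorem \ref{thm:rank 2}, $D$ is therefore switching equivalent either to $K_{a',b'} \cup tK_1$ or to $\Cabc{a',b',c'} \cup tK_1$ for positive integers $a',b',c'$ and $t\ge 0$, and cospectrality with a target digraph $\Cabc{a,b,c}$ gives two equations: the vertex count equation ($a'+b'+t=a+b+c$ or $a'+b'+c'+t=a+b+c$) and, by (\ref{eq:1}) and (\ref{eq:2}), the edge count equation expressing that $a'b'$ or $a'b'+b'c'+a'c'$ equals the fixed value $\rho^2 = ab+bc+ac$.

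To rule out the bipartite branch I apply AM--GM: $a'+b'\ge 2\sqrt{a'b'}=2\sqrt{\rho^2}$, so the branch is excluded as soon as $4\rho^2>(a+b+c)^2$, which upon expanding is just $2(ab+bc+ac)>a^2+b^2+c^2$. This is a routine check in the three cases: for $(n,n,n)$ it is $6n^2>3n^2$; for $(n,n,n+1)$ it reduces to $(3n-1)(n+1)>0$; and for $(n-1,n,n)$ to $(3n+1)(n-1)>0$, all valid in the relevant range of $n$.

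For the tripartite branch I use the elementary inequality $(a'+b'+c')^2\ge 3(a'b'+b'c'+a'c')=3\rho^2$, with equality iff $a'=b'=c'$. Combined with $a'+b'+c'\le a+b+c$, this gives $3\rho^2\le (a+b+c)^2$. For $\Cabc{n,n,n}$ we have $3\rho^2=9n^2=(a+b+c)^2$, so equality must hold, forcing $a'=b'=c'=n$ and $t=0$ immediately. For the two asymmetric cases one checks that $3\rho^2$ lies strictly between $(a+b+c-1)^2$ and $(a+b+c)^2$, so $a'+b'+c'$ must equal $a+b+c$, hence $t=0$, and then $(a'+b'+c')^2-2(a'b'+b'c'+a'c')$ pins down the value of $a'^2+b'^2+c'^2$.

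The remaining work is to solve the Diophantine system $a'+b'+c'=N$, $a'^2+b'^2+c'^2=M$ for the prescribed constants $(N,M)$. Setting $s=a'+b'$, $p=a'b'$, and $c'=N-s$, the second equation determines $p$ as a quadratic function of $s$, and the requirement $s^2-4p\ge 0$ becomes a quadratic inequality in $s$ whose discriminant works out to a small constant (in fact $16$ in both cases), giving an interval of length $4/3$ and hence at most two admissible integer values of $s$. Each such $s$ yields only a permutation of the target triple $(n,n,n+1)$ or $(n-1,n,n)$. I expect this last elementary step to be the least aesthetic part, but it is entirely routine; the only subtlety is ensuring $a',b',c'\ge 1$ at each step, which is where the implicit assumption $n\ge 2$ in the statement $\Cabc{n-1,n,n}$ is used.
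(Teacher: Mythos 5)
Your proof is correct, but it takes a genuinely different route from the paper's. The paper reduces the whole question (including the complete bipartite competitors, which it absorbs by allowing a part of size $0$) to a single smoothing argument: for non-negative integer triples with a fixed sum, replacing $(x,y,z)$ with $(x+1,y,z-1)$ when $x\le z-2$ strictly increases $xy+xz+yz$, so that quantity is uniquely maximized by the near-balanced triple; since $(n,n,n)$, $(n,n,n+1)$ and $(n-1,n,n)$ are exactly these maximizers for each residue of the sum modulo $3$, any other triple with the same or smaller vertex count has strictly fewer edges, and DHS follows in two lines. You instead dispose of the bipartite branch by AM--GM, pin down $t=0$ and the exact vertex count via $(a'+b'+c')^2\ge 3(a'b'+b'c'+c'a')$, and then solve the resulting sum/sum-of-squares Diophantine system explicitly (your discriminant computations check out: $24M-8N^2=16$ in both asymmetric cases, giving an interval of length $4/3$ for $s=a'+b'$ and hence only permutations of the target triple). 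Your version is longer and more computational but completely explicit, enumerating all integer solutions rather than invoking uniqueness of a maximizer; the paper's exchange argument is shorter, conceptually cleaner, and makes transparent \emph{why} precisely these three families are DHS (they have the fewest vertices for their edge count). Both arguments are sound, and both correctly require $n\ge 2$ for the family $\Cabc{n-1,n,n}$ so that all parts are nonempty.
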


\begin{proof}
By Theorem \ref{thm:rank 2}, $\Cabc{a,b,c}$ ($0\le a\le b\le c$) is DHS if and only if for arbitrary integers $0\le x\le y\le z$ satisfying $xy+xz+yz=ab+ac+bc$ and $x+y+z\le a+b+c$ we have that $(x,y,z) = (a,b,c)$. We include the possibility that the parameters are 0 in order to treat the complete bipartite graphs at the same time and in the same way as the complete tripartite case. For the cases of this corollary we will prove that whenever $x+y+z\le a+b+c$, we have that $xy+xz+yz<ab+ac+bc$ unless $(x,y,z) = (a,b,c)$. To prove this, we may assume that $x+y+z=a+b+c$.

If $x\le z-2$, we may replace $x,y,z$ with $x+1,y,z-1$. This triple increases the mixed sum, an therefore the maximum value of $xy+xz+yz$ is when $x,y,z$ are within one from each other. For each value of $x+y+z$ modulo 3 there is a unique maximal solution that is given by the cases of the corollary. This completes the proof.
\end{proof}

The digraphs $\Cabc{n,n,n}$, $\Cabc{n,n,n+1}$, and $\Cabc{n-1,n,n}$ are DHS because they have smallest number of vertices for the number of edges they contain. There are many other cases, where DHS follows by number theoretic reasons.
One can show also that $\Cabc{n,n,n+2}$ and $\Cabc{n,n,n+3}$ is DHS.
Let us display another interesting family.

\begin{corollary}
\label{cor:big DHS family}
Suppose that $a$ and $n>a>0$ are integers such that $a^2<2n$.  Then the digraph $\Cabc{n-a,n,n+a}$ is DHS if and only if $a$ is not divisible by a prime that is congruent to $1$ modulo~$6$.
\end{corollary}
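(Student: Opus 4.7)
The plan is to translate the DHS condition into a Diophantine condition and count its solutions using the arithmetic of the Eisenstein integers $\mathbb{Z}[\omega]$ (where $\omega = e^{2\pi i/3}$).

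\emph{Reduction to a Diophantine equation.} By Theorem \ref{thm:rank 2}, two rank-$2$ mixed graphs (complete tripartite or bipartite, plus isolated vertices) are switching equivalent iff their multisets of part sizes and isolated counts coincide, and cospectral iff they have the same order and edge count. Since $\Cabc{n-a,n,n+a}$ has $3n$ vertices and $3n^2-a^2$ edges, I need to determine when the only triple of integers $0\le x\le y\le z$ satisfying $s := x+y+z \le 3n$ and $xy+yz+xz = 3n^2-a^2$ is $(n-a,n,n+a)$; allowing $x=0$ handles the $K_{y,z}\cup tK_1$ alternative. Setting $p = y-x \ge 0$ and $q = z-y \ge 0$, the identity $(y-x)^2+(z-y)^2+(z-x)^2 = 2s^2 - 6(xy+yz+xz)$ rewrites as
\[
  p^2 + pq + q^2 \;=\; s^2 - 9n^2 + 3a^2 .
\]
The hypothesis $a^2<2n$ gives $3a^2 \le 6n-3$, so $s^2 \ge 9n^2-6n+3 > (3n-1)^2$, and $s \le 3n$ then forces $s = 3n$. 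Hence the question reduces to: is $(p,q)=(a,a)$ the unique non-negative integer solution of $p^2+pq+q^2 = 3a^2$?

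\emph{Counting via Eisenstein integers.} Set $\tau = 1+\omega = e^{i\pi/3}$, so that $\mathbb{Z}[\omega] = \mathbb{Z}[\tau]$ and $N(p+q\tau) = p^2+pq+q^2$. The six units $\{\pm 1,\pm\tau,\pm\tau^2\}$ act on $\mathbb{C}$ as rotations by $60^\circ$; the cone $\{p+q\tau : p,q \ge 0\}$ is exactly one closed $60^\circ$-sector of this action. Since $3a^2$ is not a perfect square for any integer $a \ge 1$, no element of norm $3a^2$ lies on a sector boundary ray, so each unit orbit contains exactly one non-negative representative. Therefore the non-negative integer solutions of $p^2+pq+q^2 = 3a^2$ are in bijection with the ideals of $\mathbb{Z}[\omega]$ of norm $3a^2$. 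Using the standard splitting behavior — $3$ ramifies, primes $\equiv 2\pmod 3$ are inert, and primes $\equiv 1\pmod 3$ split — together with multiplicativity, if $a = 3^{c_0}\prod_i p_i^{e_i}\prod_j q_j^{f_j}$ with $p_i \equiv 1\pmod 3$ and $q_j \equiv 2\pmod 3$, then the number of such ideals equals $\prod_i (2e_i+1)$.

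\emph{Conclusion and validity.} This product equals $1$ if and only if no $p_i$ appears, that is, no prime $\equiv 1\pmod 3$ divides $a$; since any prime $\equiv 1 \pmod 3$ is odd and greater than $3$, this is the same as requiring that no prime $\equiv 1\pmod 6$ divides $a$. In that case $(a,a)$ is the only non-negative solution and $\Cabc{n-a,n,n+a}$ is DHS. Otherwise, any extra solution $(p',q')\ne(a,a)$ satisfies $2p'+q' \le 2\sqrt{3}\,a < 2\sqrt{6n} \le 3n$ (the last inequality since the existence of a prime factor $\equiv 1\pmod 6$ of $a$ forces $a\ge 7$ and hence $n>24$); therefore $(x,y,z) := \tfrac{1}{3}(3n-2p'-q',\,3n+p'-q',\,3n+p'+2q')$ is a legitimate non-negative integer triple, and by Theorem \ref{thm:rank 2} it produces a mixed graph cospectral with but not switching-equivalent to $\Cabc{n-a,n,n+a}$. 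The one substantive step is the ideal count in the second paragraph (equivalently, the arithmetic of the form $p^2+pq+q^2$); everything else consists of elementary inequalities and direct verification.
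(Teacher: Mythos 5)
Your proposal is correct, and while its overall strategy mirrors the paper's --- use Theorem \ref{thm:rank 2} to reduce the DHS question to counting representations of a number by the binary quadratic form $p^2+pq+q^2$ --- the execution differs in two ways, both to your advantage. First, you eliminate cospectral mates having isolated vertices by the single inequality $s^2\ge 9n^2-3a^2>(3n-1)^2$, which forces $s=3n$; the paper instead writes a putative mate as $\Cabc{n+x,n+y,n+z}\cup tK_1$ and runs an induction on $t=-(x+y+z)$ to show $2nt-(xy+xz+yz)>a^2$ for all $t\ge1$. Both arguments hinge on the hypothesis $a^2<2n$, but yours is shorter and avoids the slightly delicate inductive step. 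Second, and more substantively: after its reduction the paper arrives at $a^2=p^2+pq+q^2$ and disposes of it by citing OEIS sequence A050931 together with the phrase ``it appears that'' its members are exactly the multiples of primes congruent to $1$ modulo $6$ --- that is, the key arithmetic fact is asserted empirically rather than proved. You actually prove it, via the norm form of $\mathbb{Z}[\omega]$, the unit action on the $60^\circ$ sectors, and the splitting of rational primes; your normalization $3a^2=p^2+pq+q^2$ with $(p,q)=(y-x,z-y)$ is equivalent to the paper's equation, the extra factor $3$ being the ramified prime. So your argument closes the one genuine gap in the published proof. Two small points are worth keeping explicit in a final write-up: the integrality of $x=n-(2p+q)/3$ (which holds because $p^2+pq+q^2\equiv 0 \pmod 3$ forces $p\equiv q\pmod 3$), and the fact that distinct sorted part-size triples yield non-switching-equivalent mixed graphs because switching preserves the underlying graph.
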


\begin{proof}
The number of edges of $\Cabc{n-a,n,n+a}$ is $e=3n^2-a^2$. It follows by Theorem \ref{thm:rank 2} that this graph is not DHS if and only if there exist integers $x\le y\le z$, where $(x,y,z)\ne (-a,0,a)$, such that $x+y+z\le0$, $n+x\ge 0$, and
\begin{equation}
   \label{eq:Heronian means 1}
   -a^2 = 2n(x+y+z) + xy + xz + yz.
\end{equation}
If $x,y,z$ exist, then a cospectral mate of $\Cabc{n-a,n,n+a}$ is $\Cabc{n+x,n+y,n+z}$ together with $t=-(x+y+z)$ isolated vertices.

Let us assume that $x,y,z$ exist and let $d=-(x+y+z)\ge0$. According to (\ref{eq:Heronian means 1}, we are interested in the value $\phi(x,y,z)=2nd-(xy+xz+yz)$. We claim that $\phi(x,y,z)>a^2$ for every instance where $d\ge1$.
Let us observe that $d^2=x^2+y^2+z^2+2(xy+xz+yz)$. If $d=1$, this implies that $2(xy+xz+yz)=1-(x^2+y^2+z^2)\le0$, since the left side is even and thus $x,y,z$ cannot all be equal to 0. Consequently, $\phi(x,y,z)\ge 2n > a^2$. If $d>1$, we use induction and apply the inductive hypothesis to the triple $x+1,y,z$:
$$
    \phi(x,y,z) = \phi(x+1,y,z) + 2n + y + z > a^2 + 2n + y + z.
$$
It suffices to see that $2n+y+z\ge0$, which follows from the fact that $z\ge y\ge x\ge -n$. This proves the claim and shows that no solution exists for $d\ge1$.

It remains to treat the case when $d=0$. Since $(x,y,z)\ne (-a,0,a)$, (\ref{eq:Heronian means 1}) implies that $y\ne0$. If $y>0$, let $p=y$ and $q=z$. Otherwise, let $p=-x$ and $q=-y$. The necessary condition (\ref{eq:Heronian means 1}) reduces to the condition $a^2=xy+xz+yz$, which can be written in terms of $p$ and $q$ as:
\begin{equation}
   \label{eq:Heronian means 2}
   a^2 = p^2 + q^2 +pq.
\end{equation}
Therefore $a,p,q$ are integers which occur as the side lengths of some triangle with integer sides and a 120 degree angle. The integers $a$ for which a solution exists are found in the On-Line Encyclopedia of Integer Sequences \cite{EncyclopediaSequences} as the Sequence A050931. It appears that the elements of this sequence are precisely all multiples of primes that are congruent to 1 modulo 6.
\end{proof}

\bibliographystyle{plain}
\bibliography{spectradigraphs}

\begin{thebibliography}{10}

\bibitem{EncyclopediaSequences}
{The On-Line Encyclopedia of Integer Sequences}, \url{https://oeis.org/}.

\bibitem{CoSi57}
Lothar Collatz and Ulrich Sinogowitz.
\newblock Spektren endlicher {G}rafen.
\newblock {\em Abh. Math. Sem. Univ. Hamburg}, 21:63--77, 1957.

\bibitem{GoMcK82}
C.~D. Godsil and B.~D. McKay.
\newblock Constructing cospectral graphs.
\newblock {\em Aequationes Math.}, 25(2-3):257--268, 1982.

\bibitem{GuPr56}
Hs.~H. G{\"u}nthard and H.~Primas.
\newblock Zusammenhang von {G}raphentheorie und {MO-Theorie} von {M}olekeln mit
  {S}ystemen konjugierter {B}indungen.
\newblock {\em Helv. Chim. Acta}, 39:1645--1653, 1956.

\bibitem{GuoMo}
Krystal Guo and Bojan Mohar.
\newblock Hermitian adjacency matrix of digraphs and mixed graphs.
\newblock submitted.

\bibitem{Ka66}
Mark Kac.
\newblock Can one hear the shape of a drum?
\newblock {\em Amer. Math. Monthly}, 73, 1966.

\bibitem{LiuLi15}
Jianxi Liu and Xueliang Li.
\newblock Hermitian-adjacency matrices and {H}ermitian energies of mixed
  graphs.
\newblock {\em Linear Algebra and its Applications}, 466:182--207, 2015.

\bibitem{McK77}
Brendan~D. McKay.
\newblock On the spectral characterisation of trees.
\newblock {\em Ars Combinatoria}, 3:219--232, 1977.

\bibitem{Schw73}
Allen~J. Schwenk.
\newblock Almost all trees are cospectral.
\newblock In {\em New directions in the theory of graphs ({P}roc. {T}hird {A}nn
  {A}rbor {C}onf., {U}niv. {M}ichigan, {A}nn {A}rbor, {M}ich., 1971)}, pages
  275--307. Academic Press, New York, 1973.

\bibitem{vDHa03}
Edwin~R. van Dam and Willem~H. Haemers.
\newblock Which graphs are determined by their spectrum?
\newblock {\em Linear Algebra Appl.}, 373:241--272, 2003.
\newblock Special issue on the Combinatorial Matrix Theory Conference (Pohang,
  2002).

\bibitem{vDHa09}
Edwin~R. van Dam and Willem~H. Haemers.
\newblock Developments on spectral characterizations of graphs.
\newblock {\em Discrete Math.}, 309(3):576--586, 2009.

\end{thebibliography}

\end{document}